\documentclass[reqno]{amsart}
\usepackage{amsmath}
\usepackage{amsthm, amscd, amssymb, amsfonts, amsbsy}
\usepackage[usenames, dvipsnames]{color}
\usepackage{enumerate}
\usepackage{hyperref}
\usepackage{verbatim}

\numberwithin{equation}{section}

\theoremstyle{plain}
\newtheorem{theorem}{Theorem}[section]
\newtheorem{lemma}[theorem]{Lemma}

\newtheorem{proposition}[theorem]{Proposition}

\theoremstyle{definition}
\newtheorem{assumption}{Assumption}[section]

\theoremstyle{remark}
\newtheorem{remark}{Remark}[section]

\makeatletter
\def\dashint{\operatorname%
{\,\,\text{\bf--}\kern-.98em\DOTSI\intop\ilimits@\!\!}}
\makeatother

\def\bR{\mathbb{R}}
\def\cB{\mathcal{B}}
\def\cE{\mathcal{E}}
\def\cL{\mathcal{L}}
\def\cM{\mathcal{M}}

\begin{document}
\title[Conormal derivative problems]{Conormal derivative problems for stationary Stokes system in Sobolev spaces}

\author[J. Choi]{Jongkeun Choi}
\address[J. Choi]{Department of Mathematics, Korea University, 145 Anam-ro, Seongbuk-gu, Seoul, 02841, Republic of Korea}
\email{jongkeun\_choi@korea.ac.kr}

\thanks{J. Choi was supported by a Korea University Grant}

\author[H. Dong]{Hongjie Dong}
\address[H. Dong]{Division of Applied Mathematics, Brown University, 182 George Street, Providence, RI 02912, USA}

\email{Hongjie\_Dong@brown.edu}

\thanks{H. Dong was partially supported by the NSF under agreement DMS-1600593.}

\author[D. Kim]{Doyoon Kim}
\address[D. Kim]{Department of Mathematics, Korea University, 145 Anam-ro, Seongbuk-gu, Seoul, 02841, Republic of Korea}

\email{doyoon\_kim@korea.ac.kr}

\thanks{D. Kim was supported by Basic Science Research Program through the National Research Foundation of Korea (NRF) funded by the Ministry of Education (2014R1A1A2054865).}

\subjclass[2010]{35R05, 76N10, 76D07, 35G45}
\keywords{Stokes system, Reifenberg flat domains, measurable coefficients, conormal derivative boundary condition}

\begin{abstract}
We prove the solvability in Sobolev spaces of the conormal derivative problem for the stationary Stokes system with irregular coefficients on bounded Reifenberg flat domains.
The coefficients are assumed to be merely measurable in one direction, which may differ depending on the local coordinate systems, and have small mean oscillations in the other directions.
In the course of the proof, we use a local version of the Poincar\'e inequality on Reifenberg flat domains,
the proof of which is of independent interest.
\end{abstract}

\maketitle

\section{Introduction}

We study $L_q$ theory of the conormal derivative problem for the stationary Stokes system with variable coefficients:
\begin{equation}		\label{170622@eq1}
\left\{
\begin{aligned}
\operatorname{div} u&=g \quad &\text{in }\, \Omega,\\
\cL u+\nabla p &=f+D_\alpha f^\alpha \quad &\text{in }\, \Omega,\\
\cB u+n p&=f^\alpha n_\alpha \quad &\text{on }\, \partial \Omega,
\end{aligned}
\right.
\end{equation}
where $\Omega$ is a bounded domain in $\bR^d$ and $n=(n_1,\ldots, n_d)^T$ is the outward unit normal to $\partial \Omega$.
The differential operator $\cL$ is in divergence form acting on column vector valued functions $u=(u_1,\ldots,u_d)^T$ as follows:
$$
\cL u=D_\alpha(A^{\alpha\beta}D_\beta u).
$$
We denote by $\cB u=A^{\alpha\beta}D_\beta u n_\alpha$ the conormal derivative of $u$ on the boundary of $\Omega$ associated with the operator $\cL$.

Throughout the paper, the coefficients $ A^{\alpha\beta}= A^{\alpha\beta}(x)$ are $d\times d$ matrix valued functions on $\bR^d$ with the entries $A_{ij}^{\alpha\beta}$ satisfying the strong ellipticity condition; see \eqref{160802@eq6a}.
We assume that the coefficients $A^{\alpha\beta}$ are merely measurable in one direction and have small mean oscillations in the other directions (partially BMO).
For more a precise definition of partially BMO coefficients, see Assumption \ref{170219@ass2}.
Stokes systems with this type of variable coefficients may be used to describe the motion of inhomogeneous fluids with density dependent viscosity and two fluids with interfacial boundaries; see \cite{arXiv:1604.02690v2, arXiv:1702.07045v1} and the references therein.
We note that Stokes systems with variable coefficients can also occur when performing a change of coordinates or when flattening the boundary. See \cite{arXiv:1705.02736}.

Extensive literature exists regarding the regularity theory for Stokes systems.
With respect to the classical Stokes system
$$
\Delta u+\nabla p=f
$$
with Dirichlet and Neumann boundary conditions,
we refer the reader to Fabes--Kenig--Verchota \cite{MR975121}, Kozlov--Maz'ya--Rossmann \cite{MR1788991}, and Boyer--Fabrie \cite{MR2986590}.
In \cite{MR975121}, the authors studied Dirichlet and Neumann boundary value problems on arbitrary Lipschitz domains with $q$ being in a restricted range.
For this line of research, see \cite{MR975122, MR2839049, MR2987056} and the references therein.
In \cite{MR1788991}, the authors considered the system on polyhedral domains.
Later, Maz'ya--Rossmann \cite{MR2228352, MR2321139} treated the Stokes system with a mixed boundary condition (containing the Neumann boundary condition) on polyhedral domains.
The authors in \cite{MR2986590} proved $L_2$-estimates for derivatives of solutions to the Stokes system on regular domains.
Regarding resolvent estimates for Neumann boundary value problems, we refer to \cite{MR1972873, MR2206829}.
See also \cite{MR641818} for various regularity results for both linear and nonlinear Stokes systems with regular coefficients subject to Dirichlet and Neumann boundary conditions.

Recently, in \cite{arXiv:1503.07290v3, arXiv:1604.02690v2, arXiv:1702.07045v1} the Dirichlet problem for the stationary Stokes system with irregular coefficients was studied.
In \cite{arXiv:1503.07290v3}, the unique solvability of the problem in Sobolev spaces was proved on a Lipschitz domain with a small Lipschitz constant when the coefficients have vanishing mean oscillations (VMO) with respect to all the variables.
This result was extended by Dong-Kim \cite{arXiv:1604.02690v2} to the case having partially BMO coefficients.
The authors also established in \cite{arXiv:1604.02690v2} a priori $L_q$-estimates on the whole Euclidean space and a half space under the assumption that $A^{\alpha\beta}$ are functions of only one variable with no regularity assumptions. Later, they further generalized their results to the framework of Sobolev spaces with Muckenhoupt weights; see \cite{arXiv:1702.07045v1}.
In particular, they proved the solvability and weighted estimates (with mixed-norm) for the system on a bounded Reifenberg flat domain.
For other results on weighted estimates for Stokes systems, we refer the reader to \cite{MR3614614}, where the authors considered BMO coefficients with small BMO semi-norms and Muckenhoupt weights in $A_{q/2}$, $q \in (2,\infty)$.

In this paper, we derive analogous results to those in \cite{arXiv:1604.02690v2} when the system has conormal derivative boundary conditions instead of Dirichlet boundary conditions.
More precisely, we prove the solvability in Sobolev spaces (without weights) and the $L_q$-estimate for the conormal derivative problem \eqref{170622@eq1} with partially BMO coefficients in a bounded Reifenberg flat domain $\Omega$ (see Theorem \ref{MT1}).
In particular, for the uniqueness of solutions $u$ to \eqref{170622@eq1}, we impose the normalization condition
$$
\int_\Omega u\,dx=0.
$$
As in \cite{arXiv:1702.07045v1}, using our result in this paper for solutions in Sobolev spaces without weights, one can investigate Stokes systems with conormal derivative boundary conditions in Sobolev spaces with Muckenhoupt weights.

Although, at the conceptual level, the paper is similar to \cite{arXiv:1604.02690v2, arXiv:1702.07045v1}, the technical details are different owing to the distinct nature of the conormal derivative boundary condition (or Neumann type boundary condition).
For instance, in the study of the Dirichlet problem ($u\equiv 0$ on $\partial \Omega$) on a Reifenberg flat domain $\Omega$, the following boundary Sobolev-Poincar\'e inequality
$$
\|u\|_{L_{dq/(d-q)}(\Omega\cap B_r(x_0))}\le N\|Du\|_{L_q(\Omega\cap B_r(x_0))}, \quad 1\le q<d,
$$
is available, where the constant $N$ depends only on $d$, $q$, and the flatness of $\partial \Omega$.
This result is an easy consequence of a Poincar\'e type inequality on the ball $B_r(x_0)$ because $u$ can be extended to a function on $B_r(x_0)$ by setting $u\equiv 0$ on $B_r(x_0)\setminus \Omega$.
For the conormal derivative problem,
one may consider a boundary Sobolev-Poincar\'e inequality
\begin{equation}		\label{170702@eq1}
\|u-c\|_{L_{dq/(d-q)}(\Omega\cap B_r(x_0))}\le N\|Du\|_{L_q(\Omega\cap B_R(x_0))}, \quad 1\le q<d,
\end{equation}
where
$$
c=\dashint_{\Omega\cap B_r(x_0)} u\,dx.
$$
It is well known that the inequality \eqref{170702@eq1} holds if $\Omega \cap B_r(x_0)$ and $\Omega \cap B_R(x_0)$ are replaced by a Lipschtiz domain $\Omega$ or a Reifenberg flat domain $\Omega$ since these domains are in the category of extension domains.
It is also known that if $\Omega$ is a Lipschitz domain, the above inequality holds with a constant $N$ depending only on $d$, $q$, and the Lipschitz constant of $ \Omega$; see, for instance, \cite[Lemma 8.1]{MR3025530}.
However, if $\Omega$ is a Reifenberg flat domain, it is not quite obvious that the inequality \eqref{170702@eq1} follows from the same type of inequality for $\Omega$ because the intersection may not retain the same nice properties as those of $\Omega$.
We were unable to find any literature dealing with the inequality \eqref{170702@eq1} on a Reifenberg flat domain $\Omega$ intersected with a ball.
To show the exact information on the parameters that the constant $N$ depends on, we provide a proof of \eqref{170702@eq1} in Appendix.
On the other hand, in the proof of \cite[Corollary 3]{MR2107043}, Byun-Wang used such type of inequality without a proof, referring to the Sobolev inequality on extension domains.
See also \cite{MR2139880, MR2373207}, in which conormal derivative problems for parabolic equations are considered.
The inequality \eqref{170702@eq1} is the key ingredient in establishing reverse H\"older's inequality of conormal derivative problems for the Stokes system; see Section \ref{170703@sec1}.

In a subsequent paper, we will study Green functions for the Stokes system with the conormal derivative boundary condition.
We note that $L_q$-estimates for boundary value problems play an essential role in the study of Green functions.
For instance, in \cite{MR3105752}, the authors obtained global pointwise estimates of Green functions for elliptic systems with conormal boundary conditions by using $L_q$-estimates for the system.

The remainder of this paper is organized as follows.
In Section \ref{S2}, we state our main result along with some notation and assumptions.
In Section \ref{S3}, we provide some auxiliary results, and in Section \ref{S4}, we establish interior and boundary Lipschitz estimates for solutions.
Finally, in Section \ref{S5} we prove the main theorem using a level set argument.
In Appendix, we provide the proof of a local version of the Poincar\'e inequality on a Reifenberg flat domain.

\section{Main results}		\label{S2}

Throughout this paper, we denote by $\Omega$ a domain in the Euclidean space $\bR^d$, where $d\ge 2$.
For any $x\in \Omega$ and $r>0$, we write $\Omega_r(x)=\Omega \cap B_r(x)$, where $B_r(x)$ is a usual Euclidean ball of radius $r$ centered at $x$.
We also denote $B^+_r(x)=B_r(x)\cap \bR^d_+$, where
$$
\bR^d_+=\{x=(x_1,x')\in \bR^d:x_1>0, \ x'\in \bR^{d-1}\}.
$$
A ball in $\bR^{d-1}$ is denoted by
$$
B_r'(x')=\{y'\in \bR^{d-1}:|x'-y'|<r\}.
$$
We use the abbreviations $B_r:=B_r(0)$ and $B_r^+:=B_r^+(0)$, where $0\in \bR^d$, and $B_r':=B_r'(0)$, where $0\in \bR^{d-1}$.

For $1\le q\le \infty$, we denote by $W^1_q(\Omega)$ the usual Sobolev space and by $\mathring{W}^1_q(\Omega)$ the completion of $C^\infty_0(\Omega)$ in $W^1_q(\Omega)$.
When $|\Omega|<\infty$, we define
$$
\tilde{L}_q(\Omega)=\{u\in L_q(\Omega): (u)_\Omega=0\}, \quad \tilde{W}^1_{q}(\Omega)=\{u\in W^1_{q}(\Omega): (u)_{\Omega}=0\},
$$
where $(u)_\Omega$ is the average of $u$ over $\Omega$, i.e.,
$$
(u)_\Omega=\dashint_\Omega u\,dx=\frac{1}{|\Omega|}\int_\Omega u\,dx.
$$

Let $\cL$ be a strongly elliptic operator of the form
$$
\cL u=D_\alpha (A^{\alpha\beta}D_\beta u).
$$
The coefficients $ A^{\alpha\beta}= A^{\alpha\beta}(x)$ are $d\times d$ matrix valued functions on $\bR^d$ with the entries $A_{ij}^{\alpha\beta}$ satisfying the strong ellipticity condition, i.e., there is a constant $\delta\in (0,1]$ such that
\begin{equation}		\label{160802@eq6a}
|A^{\alpha\beta}|\le \delta^{-1}, \quad \sum_{\alpha,\beta=1}^dA^{\alpha\beta}\xi_\beta\cdot \xi_\alpha\ge \delta\sum_{\alpha=1}^d|\xi_\alpha|^2
\end{equation}
for any $x\in \bR^d$ and $\xi_\alpha\in \bR^d$, $\alpha\in \{1,\ldots,d\}$.
We denote by $\cB u=A^{\alpha\beta}D_\beta u n_\alpha$ the conormal derivative of $u$ on the boundary of $\Omega$ associated with the elliptic operator $\cL$.
The $i$-th component of $\cB u$ is given by
$$
(\cB u)_i=A^{\alpha\beta}_{ij}D_\beta u_jn_\alpha,
$$
where $ n=(n_1,\ldots, n_d)^T$ is the outward unit normal to $\partial \Omega$.
Let $q, q_1 \in (1,\infty)$, $q_1\ge qd/(q+d)$, and $\Omega$ be a bounded domain in $\bR^d$.
For $f\in\tilde{L}_{q_1}(\Omega)^d$ and $f^\alpha \in L_{q}(\Omega)^d$, we say that $(u, p)\in W^1_{q}(\Omega)^d\times L_{q}(\Omega)$ is a weak solution of the problem
$$
\left\{
\begin{aligned}
\cL u+\nabla p= f+D_\alpha  f^\alpha \quad &\text{in }\ \Omega,\\
\cB u+np= f^\alpha n_\alpha \quad &\text{on }\ \partial \Omega,
\end{aligned}
\right.
$$
if
\[
\int_{\Omega} A^{\alpha\beta}D_\beta u\cdot D_\alpha \phi\ dx+\int_\Omega p \operatorname{div} \phi\ dx=-\int_{\Omega} f\cdot \phi\ dx+\int_\Omega f^\alpha\cdot D_\alpha \phi \ dx
\]
holds for any $\phi\in W^1_{q/(q-1)}(\Omega)^d$.

\begin{assumption}[$\gamma$]		\label{170219@ass2}
There exists $R_0\in (0,1]$ such that the following hold.
\begin{enumerate}[$(i)$]
\item
For $x_0\in \Omega$ and $0<R\le \min\{R_0,\operatorname{dist}(x_0,\partial \Omega)\}$, there exists a coordinate system depending on $x_0$ and $R$ such that in this new coordinate system, we have that
\begin{equation}		\label{170219@eq1}
\dashint_{B_R(x_0)}\bigg|A^{\alpha\beta}(x_1,y')-\dashint_{B'_R(x'_0)}A^{\alpha\beta}(x_1,y')\,dy'\bigg|\,dx\le \gamma
\end{equation}
\item
For any $x_0\in \partial \Omega$ and $0<R\le R_0$, there is a coordinate system depending on $x_0$ and $R$ such that in the new coordinate system we have that \eqref{170219@eq1} holds, and
$$
\{y:x_{01}+\gamma R<y_1\}\cap B_R(x_0)\subset \Omega_R(x_0)\subset \{y:x_{01}-\gamma R<y_1\}\cap B_R(x_0),
$$
where $x_{01}$ is the first coordinate of $x_0$ in the new coordinate system.
\end{enumerate}
\end{assumption}

The main result of the paper reads as follows.

\begin{theorem}		\label{MT1}
Let $q,q_1\in (1,\infty)$ satisfying $q_1\ge qd/(q+d)$, and let $\Omega$ be a bounded domain with $\operatorname{diam}\Omega\le K$.
Then there exists a constant $\gamma=\gamma(d,\delta,q)\in (0,1/48]$ such that, under Assumption \ref{170219@ass2} $(\gamma)$, the following holds:
for $f\in \tilde{L}_{q_1}(\Omega)^d$, $f^\alpha\in L_q(\Omega)^d$, and $g\in L_q(\Omega)$, there exists a unique $(u,p)\in \tilde{W}^1_q(\Omega)^d\times L_q(\Omega)$ satisfying
\begin{equation}		\label{170512@eq4}
\left\{
\begin{aligned}
\operatorname{div} u=g \quad &\text{in }\ \Omega,\\
\cL u+\nabla p=f+D_\alpha  f^\alpha \quad &\text{in }\ \Omega,\\
\cB u+n p = f^\alpha n_\alpha \quad &\text{on }\ \partial \Omega
\end{aligned}
\right.
\end{equation}
and
\begin{equation}		\label{170512@eq4a}
\|Du\|_{L_q(\Omega)}+\|p\|_{L_q(\Omega)}\le N\|f\|_{L_{q_1}(\Omega)}+\tilde{N}\big(\|f^\alpha\|_{L_q(\Omega)}+\|g\|_{L_q(\Omega)}\big),
\end{equation}
where $N=N(d,\delta,q,q_1,R_0,K)$ and $\tilde{N}=\tilde{N}(d,\delta,q, R_0,K)$.
\end{theorem}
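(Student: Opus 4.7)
The plan is to follow the Dong--Kim framework for Stokes systems with partially BMO coefficients on Reifenberg flat domains, splitting the argument into (i) an a priori estimate giving \eqref{170512@eq4a}, and (ii) existence via the method of continuity, with uniqueness following from the estimate applied to the difference of two solutions normalized by $(u)_\Omega = 0$.

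For the a priori estimate, the level set argument of Section \ref{S5} rests on a mean oscillation bound for $(Du, p)$. At a point $x_0 \in \overline{\Omega}$ and a scale $r \le R_0$, I would freeze the coefficients by replacing $A^{\alpha\beta}(x)$ with its average $\bar A^{\alpha\beta}(x_1)$ over the transversal directions (as permitted by Assumption \ref{170219@ass2}), and decompose $u = v + w$, where $v$ solves the homogeneous system with frozen coefficients and no right-hand side on $\Omega_r(x_0)$ (or on $B_r^+$ after flattening), while $w$ absorbs the perturbation $(A^{\alpha\beta} - \bar A^{\alpha\beta})D_\beta u$ together with the inhomogeneities $f, f^\alpha, g$. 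For $v$ the interior and boundary Lipschitz estimates promised in Section \ref{S4} give pointwise control of the mean oscillation of $Dv$. For $w$ I would control $\|Dw\|_{L_{q_0}}$ for some $q_0 \in (1, q)$ by $\gamma^{1/q_0}$ times an $L_{q_0}$-average of $Du$ on a larger concentric ball, plus averaged data terms; the extra integrability beyond $q_0$ that one spends against the small factor $\gamma^{1/q_0}$ is produced by a reverse H\"older inequality. This reverse H\"older step is precisely where the local Sobolev--Poincar\'e inequality \eqref{170702@eq1} of the appendix is indispensable: the conormal boundary condition forbids extending $u$ by zero across $\partial\Omega$, so the Caccioppoli-plus-Sobolev chain producing reverse H\"older needs a Poincar\'e estimate valid directly on $\Omega_r(x_0)$, not on the whole of $\Omega$.

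Combining the two decomposition pieces yields a bound of the schematic form
\[
\bigg(\dashint_{\Omega_r(x_0)} \bigl|Du - (Du)_{\Omega_r(x_0)}\bigr|^{q_0}\,dx\bigg)^{1/q_0} \le N\gamma^{1/q_0}\bigg(\dashint_{\Omega_R(x_0)} |Du|^{q_0}\,dx\bigg)^{1/q_0} + N\,(\text{data}),
\]
with an analogous control for $p$. A Krylov-type level set argument together with the Hardy--Littlewood maximal inequality then upgrades this mean oscillation bound to the $L_q$-estimate \eqref{170512@eq4a}, once $\gamma$ is chosen sufficiently small in terms of $d, \delta, q$; the condition $q_1 \ge qd/(q+d)$ enters only through Sobolev embedding, so that test functions $\phi \in W^1_{q/(q-1)}(\Omega)$ lie in $L_{q_1/(q_1-1)}(\Omega)$ and the $\int f\cdot\phi$ term in the weak formulation is controlled by $\|f\|_{L_{q_1}}$. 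Existence then follows by the method of continuity along $A_t^{\alpha\beta} = tA^{\alpha\beta} + (1-t)\delta^{\alpha\beta} I$, $t \in [0,1]$, with endpoint $t = 0$ being the classical Neumann Stokes problem whose $L_2$-solvability is standard. The principal obstacle, in my view, is the boundary mean oscillation estimate: the conormal condition couples $u$ and $p$ along $\partial\Omega$, Reifenberg flatness affords only Hausdorff-close approximation by hyperplanes rather than Lipschitz graphs, and the zero-extension device used for the Dirichlet case is unavailable here, forcing the joint use of the local Poincar\'e inequality \eqref{170702@eq1} and a genuine conormal (rather than pure Neumann on a flat half-space) boundary regularity theory for the frozen-coefficient problem.
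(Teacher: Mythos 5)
Your high-level skeleton (coefficient freezing, decomposition $u=v+w$, reverse H\"older via the local Poincar\'e inequality, level sets and maximal functions, $\gamma$ small) matches the structure of Section \ref{S5}, but there is a genuine gap at the central step, and it is exactly the one that distinguishes merely measurable-in-$x_1$ coefficients from VMO coefficients. You propose to close the argument with a mean oscillation bound of the form
\begin{equation*}
\Big(\dashint_{\Omega_r(x_0)}\bigl|Du-(Du)_{\Omega_r(x_0)}\bigr|^{q_0}\,dx\Big)^{1/q_0}
\le N\gamma^{1/q_0}\Big(\dashint_{\Omega_R(x_0)}|Du|^{q_0}\,dx\Big)^{1/q_0}+N(\text{data}),
\end{equation*}
arguing that the ``Lipschitz estimates'' of Section \ref{S4} control the oscillation of $Dv$. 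But Lemma \ref{160815@lem1} only yields $L_\infty$ bounds for $Dv$ and $p_2$; the H\"older seminorm bounds are for $D_{x'}v$ and the single linear combination $U=A_0^{1\beta}D_\beta v + p e_1$, \emph{not} for the full gradient. This is forced by the structure: with $A_0^{\alpha\beta}=A_0^{\alpha\beta}(x_1)$ merely measurable, $D_1 v$ has no continuity in $x_1$. Consequently the oscillation of $Dv$ on $\Omega_r$ is only bounded by $2\|Dv\|_{L_\infty}\lesssim \|Du\|_{L_2(\Omega_R)}$, with a constant $N$ that is \emph{not} small, and your schematic oscillation bound is false; the Fefferman--Stein/Krylov mean oscillation iteration cannot absorb the $N\|Du\|$ term on the right. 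What the paper does instead (Proposition \ref{170509_prop1} plus Lemma \ref{170512@lem5}) is precisely designed to avoid this: $(Du,p)$ is split into $(W,p_1)+(V,p_2)$ where $(W,p_1)$ is small in $L_2$ (factor $\gamma^{1/(2\nu)}$) and $(V,p_2)$ is bounded in $L_\infty$ (constant $N_1$, not small), and then Chebyshev plus the crawling-of-ink-spots lemma is applied to the level sets $\cE_1(\kappa s)$, choosing $\kappa$ large so that $\{|W|+|p_1|>\kappa-N_1\}$ has small measure. The $L_\infty$ control on $(V,p_2)$ suffices for this argument precisely because the constant $N_1$ is used to shift the threshold $\kappa$, not to produce smallness.

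A second, less serious, divergence: you treat all $q\in(1,\infty)$ uniformly via the method of continuity, whereas the paper splits into $q>2$ (direct bootstrap from the $L_2$ solution of Lemma \ref{170224@lem2} via the level-set argument) and $q\in(1,2)$ (a priori estimate by duality against the adjoint problem in $L_{q'}$ with $q'>2$, followed by truncation of the data and weak compactness for existence). The level-set machinery only runs ``upward'' from $q=2$, so if you abandon the mean oscillation route and adopt the paper's decomposition, you will also need this case distinction, since the small-$q$ estimate does not come for free from the argument. Your treatment of the $f$ term is fine in spirit; note only that the paper implements it by solving a divergence equation $f=\sum_\alpha D_\alpha\Phi^\alpha$ with $\|\Phi^\alpha\|_{L_q}\le N\|f\|_{L_{q_1}}$ (via the John-domain result and Sobolev--Poincar\'e), which is the dual of your test-function embedding observation; this also explains why the constant $N$ in front of $\|f\|_{L_{q_1}}$ picks up a dependence on $q_1$, $R_0$, $K$ while $\tilde N$ does not.
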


\section{Auxiliary results}		\label{S3}

In this section, we derive some auxiliary results. We impose no regularity assumptions on the coefficients $A^{\alpha\beta}$ of the operator $\cL$.

\subsection{$W^1_2$-solvability}

The lemma below shows that the divergence equation is solvable in $\tilde{W}^1_q(\Omega)^d$ provided that $\Omega$ is bounded.

\begin{lemma}		\label{170224@lem1}
Let $\Omega$ be a bounded domain in $\bR^d$, $q\in (1,\infty)$, and $g\in L_q(\Omega)$.
Then there exists a function $u\in \tilde{W}^1_q(\Omega)^d$ such that
$$
\operatorname{div}u=g \quad \text{in }\, \Omega, \quad \|Du\|_{L_q(\Omega)}\le N\|g\|_{L_q(\Omega)},
$$
where the constant $N$ depends only on $d$ and $q$.
\end{lemma}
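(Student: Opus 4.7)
The plan is to reduce the problem to the classical case of the divergence equation on a ball with mean-zero data, for which Bogovski\u{\i}'s construction is available. The key observation is that $\tilde W^1_q(\Omega)^d$ imposes no boundary condition on $u$, which allows us to extend $g$ outside $\Omega$ and work on a larger, geometrically nice set.

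First I would enclose $\Omega$ in a large ball $B$ chosen so that $|B\setminus\Omega|$ is comparable to $|\Omega|$. Concretely, take $B=B_{2R}(x_0)$, where $B_R(x_0)$ is the smallest closed ball containing $\overline{\Omega}$; then $|B\setminus\Omega|\ge (2^d-1)|\Omega|$. Next, extend $g$ to a function $\tilde g\in L_q(B)$ by setting $\tilde g=g$ on $\Omega$ and $\tilde g\equiv c$ on $B\setminus\Omega$, where the constant $c=-|B\setminus\Omega|^{-1}\int_\Omega g\,dx$ is picked so that $\int_B \tilde g\,dx=0$. A direct H\"older estimate combined with the volume comparison just obtained yields $\|\tilde g\|_{L_q(B)}\le N(d,q)\|g\|_{L_q(\Omega)}$.

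Then I would invoke the classical Bogovski\u{\i} solution operator on the ball $B$ (see, e.g., Galdi's monograph on the Navier--Stokes equations) to produce $v\in \mathring{W}^1_q(B)^d$ satisfying $\operatorname{div} v=\tilde g$ in $B$ together with $\|Dv\|_{L_q(B)}\le N(d,q)\|\tilde g\|_{L_q(B)}$. The constant here depends only on $d$ and $q$: by the natural scaling of the inequality, it is independent of the radius of $B$, and hence of $\Omega$. Finally, set $u:=v|_\Omega-(v)_\Omega$. Subtracting a constant vector preserves both the divergence and the gradient, so $u\in\tilde W^1_q(\Omega)^d$, $\operatorname{div} u=g$ in $\Omega$, and
\[
\|Du\|_{L_q(\Omega)}=\|Dv\|_{L_q(\Omega)}\le\|Dv\|_{L_q(B)}\le N(d,q)\|g\|_{L_q(\Omega)}.
\]

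The main subtlety is the requirement that $N$ depend only on $d$ and $q$, despite $\Omega$ being an arbitrary bounded domain with no regularity hypothesis. The extension trick sidesteps this difficulty: rather than solving the divergence equation directly on the possibly irregular $\Omega$ (which would force constants depending on the geometry of $\Omega$), we transfer the problem to a ball, where Bogovski\u{\i}'s estimate is scale-invariant, and the absence of a boundary condition in $\tilde W^1_q$ lets us simply restrict and normalize the mean without any loss in the estimate.
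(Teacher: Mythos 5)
Your proposal is correct and follows essentially the same strategy as the paper: enclose $\Omega$ in a ball, reduce to mean-zero data on the ball, apply the Bogovski\u{\i} solution operator, restrict to $\Omega$, and subtract the mean. The only difference is in handling the mean: the paper extends $g$ by zero, solves with data $\bar g-(\bar g)_{B_R}$, and then adds back the linear field $(\bar g)_{B_R}x/d$ whose divergence restores $\bar g$, whereas you extend by a constant on $B\setminus\Omega$ so that the extension already has mean zero; both variants yield a constant depending only on $d$ and $q$.
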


\begin{proof}
Assume that $\Omega\subset B_R$ for some $R\ge 1$.
We denote $\bar{g}=g\chi_\Omega$, where $\chi_\Omega$ is the characteristic function.
By the well-known result on the existence of solutions of the divergence equation in a ball, there exists $v\in \mathring{W}^1_{q}(B_R)^d$ such that
$$
\operatorname{div}v=\bar{g}-(\bar{g})_{B_R} \quad \text{in }\, B_R, \quad \|Dv\|_{L_p(B_R)}\le N(d,q)\|\bar{g}\|_{L_{q}(B_R)}.
$$
We define $u=w-(w)_\Omega$, where
$$
w=v+\frac{(\bar{g})_{B_R}}{d}x.
$$
It then follows that $\operatorname{div}u=g$ in $\Omega$.
Moreover, we get
$$
\|Du\|_{L_q(\Omega)}\le N(d,q)\|g\|_{L_q(\Omega)}
$$
because
$$
|Du|=\left|Dv+\frac{(\bar{g})_{B_R}}{d}I\right|\le |Dv|+|(\bar{g})_{B_R}|,
$$
where $I$ is the $d\times d$ identity matrix.
\end{proof}

\begin{lemma}		\label{170224@lem2}
Let $\Omega$ be a bounded domain in $\bR^d$.
Assume that there exists a constant $K_0>0$ such that
\begin{equation}		\label{170513_eq9}
\|\phi\|_{L_2(\Omega)}\le K_0\|D\phi\|_{L_2(\Omega)} \quad \text{for all }\, \phi\in \tilde{W}^1_2(\Omega).
\end{equation}
Then, for any $ f^\alpha\in L_2(\Omega)^d$ and $g\in L_2(\Omega)$, there exists a unique $(u,p)\in \tilde{W}^1_2(\Omega)^d\times {L}_2(\Omega)$ satisfying
\begin{equation}		\label{0511@eq1}
\left\{
\begin{aligned}
\operatorname{div} u=g \quad &\text{in }\ \Omega,\\
\cL u+\nabla p=D_\alpha  f^\alpha \quad &\text{in }\ \Omega,\\
\cB u+n p = f^\alpha n_\alpha \quad &\text{on }\ \partial \Omega.
\end{aligned}
\right.
\end{equation}
Moreover, we have
\begin{equation}		\label{170703@eq2}
\|Du\|_{L_2(\Omega)}+\|p\|_{L_2(\Omega)}\le N\left(\|f^\alpha\|_{L_2(\Omega)}+\|g\|_{L_2(\Omega)}\right),
\end{equation}
where $N=N(d,\delta)$.
\end{lemma}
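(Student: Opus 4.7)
The plan is to reduce to the divergence-free case using Lemma \ref{170224@lem1}, then solve the reduced problem by Lax--Milgram on the space of divergence-free functions, and finally recover the pressure by a de Rham type duality argument that again exploits the surjectivity of the divergence provided by Lemma \ref{170224@lem1}.

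First I would pick $\tilde u\in \tilde W^1_2(\Omega)^d$ with $\operatorname{div}\tilde u = g$ and $\|D\tilde u\|_{L_2(\Omega)}\le N\|g\|_{L_2(\Omega)}$ via Lemma \ref{170224@lem1}, and look for $u$ in the form $u=v+\tilde u$. Setting $F^\alpha = f^\alpha - A^{\alpha\beta}D_\beta \tilde u$, the problem \eqref{0511@eq1} reduces to finding $(v,p)\in \tilde W^1_2(\Omega)^d\times L_2(\Omega)$ with $\operatorname{div} v = 0$ and
\[
\int_\Omega A^{\alpha\beta}D_\beta v\cdot D_\alpha \phi\,dx + \int_\Omega p\operatorname{div}\phi\,dx = \int_\Omega F^\alpha\cdot D_\alpha \phi\,dx
\]
for every $\phi\in W^1_2(\Omega)^d$, with $\|F^\alpha\|_{L_2(\Omega)}\le N(\|f^\alpha\|_{L_2(\Omega)}+\|g\|_{L_2(\Omega)})$.

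Next, I would work on the Hilbert space $\tilde H=\{\phi\in \tilde W^1_2(\Omega)^d:\operatorname{div}\phi=0\}$ equipped with the inner product $(v,\phi)\mapsto \int_\Omega D v:D \phi\,dx$; the Poincar\'e hypothesis \eqref{170513_eq9} makes this equivalent to the $W^1_2$ inner product on $\tilde H$. The bilinear form $B(v,\phi)=\int_\Omega A^{\alpha\beta}D_\beta v\cdot D_\alpha\phi\,dx$ is bounded and, by the strong ellipticity \eqref{160802@eq6a}, coercive on $\tilde H$, so Lax--Milgram produces a unique $v\in \tilde H$ solving the reduced equation for every divergence-free test function $\phi\in \tilde H$, together with $\|Dv\|_{L_2(\Omega)}\le N\|F^\alpha\|_{L_2(\Omega)}$.

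To recover the pressure I would consider the bounded linear functional
\[
L(\phi)=\int_\Omega F^\alpha\cdot D_\alpha\phi\,dx - \int_\Omega A^{\alpha\beta}D_\beta v\cdot D_\alpha\phi\,dx
\]
on $\tilde W^1_2(\Omega)^d$, which by construction vanishes on $\tilde H$. Using the surjectivity of $\operatorname{div}:\tilde W^1_2(\Omega)^d\to L_2(\Omega)$ from Lemma \ref{170224@lem1}, the rule $\ell(h)=L(\phi_h)$, where $\phi_h$ is any $\tilde W^1_2$ preimage of $h$, is well defined, linear, and bounded with $|\ell(h)|\le N\|F^\alpha\|_{L_2(\Omega)}\|h\|_{L_2(\Omega)}$. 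Riesz representation then yields a unique $p\in L_2(\Omega)$ with $\ell(h)=-\int_\Omega p h\,dx$, which unwinds to $L(\phi)=-\int_\Omega p\operatorname{div}\phi\,dx$ for every $\phi\in \tilde W^1_2(\Omega)^d$, and hence for every $\phi\in W^1_2(\Omega)^d$ since constant fields are annihilated by both sides. Setting $u=v+\tilde u$ gives a solution of \eqref{0511@eq1} satisfying \eqref{170703@eq2}; uniqueness follows by testing the homogeneous problem with the solution itself (which lies in $\tilde H$) to deduce $Du=0$ via ellipticity and \eqref{170513_eq9}, and then by exploiting the surjectivity of the divergence once more to conclude $p=0$. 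I expect the de Rham step for the pressure to be the most delicate point, although Lemma \ref{170224@lem1} essentially reduces it to Riesz representation.
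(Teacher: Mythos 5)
Your argument is correct and follows essentially the same route the paper has in mind: the paper defers to the proof of \cite[Lemma 3.1]{arXiv:1503.07290v3}, which is precisely this scheme of removing the divergence constraint via Lemma \ref{170224@lem1}, applying Lax--Milgram on the divergence-free subspace of $\tilde{W}^1_2(\Omega)^d$ equipped with the $D$-inner product (the Poincar\'e hypothesis \eqref{170513_eq9} giving completeness), and recovering the pressure by Riesz representation through the surjectivity of $\operatorname{div}$. The only substantive difference is presentational: you have written out the de Rham/duality step in full rather than citing it.
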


\begin{proof}
By \eqref{170513_eq9}, $\tilde{W}^1_2(\Omega)$ can be understood as a Hilbert space with the inner product
$$
\langle u,v\rangle =\int_\Omega D_\alpha u\cdot D_\alpha v\,dx.
$$
The proof of the lemma is then nearly the same as that of \cite[Lemma 3.1]{arXiv:1503.07290v3}, by using Lemma \ref{170224@lem1} instead of the condition (D) in \cite[Section 3.1]{arXiv:1503.07290v3}.
We note that the constant $K_0$ in \eqref{170513_eq9} does not appear in the derivation of the estimate \eqref{170703@eq2}.
We omit the details.
\end{proof}

\begin{remark}
If $\Omega$ is a bounded Reifenberg flat domain as in Assumption \ref{170219@ass2} (ii),
the assumption in Lemma \ref{170224@lem2} is satisfied.
In other words, because $\Omega$ is an extension domain (see, for instance, \cite{MR631089, MR3186805}), the Poincar\'e inequality \eqref{170513_eq9} holds.
We refer the reader to \cite[pp. 286--290]{MR2597943} for more details.
We note that the arguments in \cite[pp. 286--290]{MR2597943} hold on a bounded extension domain.

\end{remark}

\subsection{Reverse H\"older's inequality}		\label{170703@sec1}

This subsection is devoted to a reverse H\"older's inequality for solutions to the Stokes system with the conormal boundary condition.

\begin{assumption}		\label{170226@ass1}
Let $\gamma\in [0,1/48]$.
There exists a positive constant $R_0$ such that the following holds:
for any $x_0\in \partial \Omega$ and $R\in (0,R_0]$, there is a coordinate system depending on $x_0$ and $R$ such that in this new coordinate system (called the coordinate system associated with $(x_0,R)$), we have
\begin{equation}		\label{170512_eq1}
\{y:x_{01}+\gamma R<y_1\}\cap B_R(x_0)\subset \Omega_R(x_0)\subset \{y:x_{01}-\gamma R<y_1\}\cap B_R(x_0),
\end{equation}
where $x_{01}$ is the first coordinate of $x_0$ in the new coordinate system.
\end{assumption}

If $\Omega$ is a bounded Reifenberg flat domain, then the Poincar\'e inequality holds over $\Omega$.
However, the domain of the Poincar\'e inequality presented in the theorem below is $\Omega \cap B_R(x_0)$, $x_0 \in \partial\Omega$, which is not a Reifenberg flat domain with the same flatness as that of $\Omega$.
Moreover, we need correct information on the parameters on which the constant of the Poincar\'e inequality depends.
Thus, for the reader's convenience, we provide a proof of the theorem in Appendix.

\begin{theorem}[Poincar\'e inequality]		\label{170503@thm1}
Let $\Omega\subset \bR^d$ be a Reifenberg flat domain satisfying Assumption \ref{170226@ass1}.
Let $x_0\in \partial\Omega$ and $R\in (0, R_0/4]$.
Then, for any $1<q<d$ and $u\in W^1_q(\Omega)$, we have
$$
\|u-(u)_{\Omega_R(x_0)}\|_{L_{dq/(d-q)}(\Omega_R(x_0))}\le N\|Du\|_{L_q(\Omega_{2R}(x_0))},
$$
where $N=N(d,q)$.
\end{theorem}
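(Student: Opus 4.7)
The plan is to deduce this inequality from the classical Sobolev--Poincar\'e inequality on a Euclidean ball, using the Reifenberg flatness of $\partial\Omega$ to bridge the two. With $\gamma \le 1/48$, the set $\Omega_{2R}(x_0)$ is only a mild perturbation of a half-ball, so one expects constants depending only on $d$ and $q$.

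First, I would pass to the coordinate system associated with $(x_0, 2R)$ from Assumption \ref{170226@ass1}. After translation, we may assume $x_{01}=0$, and then
\[
\{y_1 > 2\gamma R\}\cap B_{2R} \;\subset\; \Omega_{2R} \;\subset\; \{y_1 > -2\gamma R\}\cap B_{2R},
\]
with $2\gamma R \le R/24$. Next, I would construct an extension $\tilde u \in W^1_q(B_R)$ of $u|_{\Omega_R(x_0)}$ obeying a \emph{homogeneous} gradient estimate
\[
\|D \tilde u\|_{L_q(B_R)} \le N(d,q)\,\|Du\|_{L_q(\Omega_{2R}(x_0))}.
\]
Given such $\tilde u$, the classical Sobolev--Poincar\'e inequality on $B_R$ yields
\[
\|\tilde u - (\tilde u)_{B_R}\|_{L_{dq/(d-q)}(B_R)} \le N(d,q)\,\|D\tilde u\|_{L_q(B_R)}.
\]
Restricting to $\Omega_R(x_0)$ and switching the subtraction constant from $(\tilde u)_{B_R}$ to $(u)_{\Omega_R(x_0)}$ via Jensen's inequality would finish the argument; here one uses the lower bound $|\Omega_R(x_0)| \ge c(d) R^d$, which follows from the inner containment and $\gamma \le 1/48$.

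The main obstacle is producing the extension with the \emph{homogeneous} bound, since Jones-type extension operators for uniform domains typically yield bounds on the full $W^1_q$ norm, not on $Du$ alone. A cleaner route is to bypass the extension entirely and argue by chaining: cover $\Omega_R(x_0)$ by Whitney-type balls, apply the standard local Sobolev--Poincar\'e on each ball (slightly enlarged inside $\Omega_{2R}(x_0)$), and chain these local estimates using the fact that, with $\gamma \le 1/48$, $\Omega_R(x_0)$ is a John domain with constants depending only on $d$. This sidesteps extensions but shifts the work to verifying the quantitative John property from the Reifenberg flatness at all scales $r\in(0,R_0]$ and carrying out a chaining argument in the style of Bojarski and Iwaniec--Martio.
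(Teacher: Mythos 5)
Your second route is essentially the paper's: the Appendix is devoted precisely to extracting a quantitative John-type property from Reifenberg flatness (Lemma \ref{170425@lem2} builds a dyadic chain of interior points hanging off a boundary point, and Proposition \ref{MTP} assembles these into a cigar curve joining any $x\in\Omega_R(x_0)$ to a fixed center $z_0$), and then the Poincar\'e inequality is derived by a Bojarski--Iwaniec--Martio style representation formula along a smeared family of such curves, with the kernel $G(x,y)$ dominated by the Riesz kernel $|x-y|^{1-d}$ and fractional integration applied. One correction worth registering: the paper does \emph{not} assert, nor does it need, that $\Omega_R(x_0)$ is itself a John domain. Proposition \ref{MTP} produces curves lying in $\Omega_{7R/4}(x_0)\subset\Omega_{2R}(x_0)$ and controls the distance to the \emph{global} boundary $\partial\Omega$ (not $\partial(\Omega_R(x_0))$); this local John-type property is exactly why the theorem's right-hand side carries $\|Du\|_{L_q(\Omega_{2R}(x_0))}$ rather than $\|Du\|_{L_q(\Omega_R(x_0))}$. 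Proving that $\Omega_R(x_0)$ is John in the usual intrinsic sense (with constant depending only on $d$) would give a stronger inequality with $\Omega_R$ on both sides, but it is far from obvious because of the corner set $\partial\Omega\cap\partial B_R(x_0)$, and the paper deliberately sidesteps it. Your chaining-with-slight-enlargement phrasing is compatible with this, but the explicit claim ``$\Omega_R(x_0)$ is a John domain'' is an over-statement of what is proved or needed. Finally, your first (extension) route and the obstacle you flag are accurately assessed: a homogeneous gradient bound for a Jones-type extension on $\Omega_R(x_0)$ would require the very same chain/John machinery, so it buys nothing and the paper does not pursue it.
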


Based on the $L_2$-estimate and Poincar\'e inequality in Theorem \ref{170503@thm1}, we obtain the following estimates for $Du$ and $p$.

\begin{lemma}	\label{170225@lem1}
Let $\frac{2d}{d+2}<q<2$, and let $\Omega\subset \bR^d$ be a Reifenberg flat domain satisfying Assumption \ref{170226@ass1}.
Suppose that $(u,p)\in W^1_2(\Omega)^d\times L_2(\Omega)$ satisfies \eqref{0511@eq1} with $f^\alpha\in L_2(\Omega)^d$ and $g\in L_2(\Omega)$.
Then, for $x_0\in \overline{\Omega}$ and $R\in (0, R_0/8]$ satisfying either
$$
 B_{2R}(x_0)\subset \Omega \quad \text{or}\quad x_0\in \partial \Omega,
$$
we have
\begin{equation}		\label{170225@eq1}
\begin{aligned}
&\int_{\Omega_{R}(x_0)}\big(|Du|^2+|p|^2\big)\,dx
\le \theta\int_{\Omega_{4R}(x_0)}|p|^2\,dx\\
&\quad +NR^{d(1-2/q)}\left(\int_{\Omega_{4R}(x_0)}\big(|Du|^2+|p|^2\big)^{q/2}\,dx\right)^{2/q}+N\int_{\Omega_{4R}(x_0)}\big(|f^\alpha|^2+|g|^2\big)\,dx,
\end{aligned}
\end{equation}
where $\theta\in (0,1)$ and $N=N(d,\delta, q,\theta)$.
\end{lemma}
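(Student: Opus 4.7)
The plan is to derive \eqref{170225@eq1} directly from a Caccioppoli-type energy estimate (applied at two scales), the Sobolev--Poincar\'e inequality of Theorem \ref{170503@thm1}, and a Necas-type pressure bound obtained via a Bogovskii construction on $\Omega_{2R}(x_0)$. I would work in the coordinate system associated with $(x_0, 4R)$ when $x_0 \in \partial\Omega$, pick a smooth cutoff $\eta \in C_c^\infty(B_{2R}(x_0))$ with $\eta = 1$ on $B_R(x_0)$ and $|D\eta| \le N/R$, and set $c_1 = (u)_{\Omega_{2R}(x_0)}$. The interior case $B_{2R}(x_0)\subset\Omega$ proceeds identically with the classical Sobolev--Poincar\'e inequality replacing Theorem \ref{170503@thm1}.

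\textbf{Caccioppoli plus Sobolev--Poincar\'e.} Testing the weak form of \eqref{0511@eq1} with $\phi = \eta^2(u - c_1)$, using the ellipticity \eqref{160802@eq6a}, and applying Young's inequality with a small parameter $\varepsilon > 0$ gives
\begin{equation*}
\int_{\Omega_R(x_0)} |Du|^2 \le N_\varepsilon R^{-2}\int_{\Omega_{2R}(x_0)}|u - c_1|^2 + \varepsilon\int_{\Omega_{2R}(x_0)}|p|^2 + N_\varepsilon\int_{\Omega_{2R}(x_0)}(|f^\alpha|^2 + |g|^2).
\end{equation*}
Theorem \ref{170503@thm1} with intermediate exponent $q^\ast = 2d/(d+2) < q$, followed by H\"older's inequality, bounds the first term on the right by $N_\varepsilon R^{d(1 - 2/q)}\big(\int_{\Omega_{4R}(x_0)}|Du|^q\big)^{2/q}$. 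Repeating the same step at the scale $\Omega_{2R}(x_0)\subset\Omega_{4R}(x_0)$ produces an analogous estimate for $\int_{\Omega_{2R}(x_0)}|Du|^2$, to be fed into the pressure step.

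\textbf{Necas-type pressure estimate.} Let $c = (p)_{\Omega_{2R}(x_0)}$. I would construct $v \in \mathring{W}^1_2(\Omega_{2R}(x_0))^d$ with $\operatorname{div} v = p - c$ and $\|Dv\|_{L_2(\Omega_{2R}(x_0))} \le N\|p - c\|_{L_2(\Omega_{2R}(x_0))}$ via a Bogovskii-type construction on $\Omega_{2R}(x_0)$, with constant $N$ depending only on $d$ and on the Reifenberg flatness. Extending $v$ by zero, plugging into the weak form of the momentum equation, and using $\int p \operatorname{div} v = \int_{\Omega_{2R}(x_0)}|p - c|^2$ yields
\begin{equation*}
\int_{\Omega_{2R}(x_0)}|p - c|^2 \le N\int_{\Omega_{2R}(x_0)}|Du|^2 + N\int_{\Omega_{2R}(x_0)}|f^\alpha|^2
\end{equation*}
after Cauchy--Schwarz and Young. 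Splitting $\int_{\Omega_R(x_0)}|p|^2 \le 2\int_{\Omega_{2R}(x_0)}|p - c|^2 + 2|\Omega_R(x_0)|\,c^2$ and using Jensen's inequality $c^2 \le \big(\dashint_{\Omega_{2R}(x_0)}|p|^q\,dx\big)^{2/q}$ contributes the additional term $N R^{d(1 - 2/q)}\big(\int_{\Omega_{4R}(x_0)}|p|^q\big)^{2/q}$.

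\textbf{Combination and main obstacle.} Substituting the Caccioppoli bound on $\Omega_{2R}(x_0)\subset\Omega_{4R}(x_0)$ into the Necas inequality, adding the result to the Caccioppoli bound on $\Omega_R(x_0)\subset\Omega_{2R}(x_0)$, and choosing $\varepsilon$ so that the total coefficient of $\int_{\Omega_{4R}(x_0)}|p|^2$ equals the prescribed $\theta\in (0,1)$ delivers \eqref{170225@eq1} upon noting $|Du|^q + |p|^q \sim (|Du|^2 + |p|^2)^{q/2}$. I expect the main obstacle to be the quantitative Bogovskii construction on $\Omega_{2R}(x_0)$: Lemma \ref{170224@lem1} alone yields only $v \in \tilde{W}^1_2$, not $\mathring{W}^1_2$, so one must either upgrade the divergence solver on the mixed-boundary set $\Omega_{2R}(x_0)$ with constants depending only on $d$ and the Reifenberg flatness, or run an auxiliary Stokes correction to knock out the trace of a $\tilde{W}^1_2$-solution on $\partial B_{2R}(x_0)\cap\Omega$. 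This geometric input is of the same flavor as the local Poincar\'e inequality of Theorem \ref{170503@thm1}, whose proof is deferred to the Appendix.
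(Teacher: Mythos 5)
Your overall architecture (Caccioppoli estimate $+$ boundary Sobolev--Poincar\'e $+$ a pressure bound via a divergence solver, then balance the $\theta$-coefficient) is the right one and matches the paper's strategy, but the pressure step as you propose it has a genuine gap, and you have in fact put your finger on it yourself. You call for a Bogovskii construction of $v \in \mathring{W}^1_2(\Omega_{2R}(x_0))^d$ with $\operatorname{div} v = p - (p)_{\Omega_{2R}(x_0)}$ and a stability constant depending only on $d$ and the Reifenberg flatness, and then concede that neither Lemma~\ref{170224@lem1} (which produces $\tilde{W}^1_2$, not $\mathring{W}^1_2$) nor any stated result of the paper delivers this. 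The set $\Omega_{2R}(x_0)$ is the intersection of a Reifenberg flat domain with a ball centered on the boundary; establishing a quantitative right inverse of the divergence on it is precisely the kind of delicate geometric fact that the paper goes out of its way to avoid (it is of the same flavor as the local Poincar\'e inequality, which required an entire appendix). ``Running an auxiliary Stokes correction to knock out the trace'' is also circular, since you would need solvability of the conormal Stokes problem on $\Omega_{2R}(x_0)$ with the right constants, which is essentially what the reverse-H\"older machinery is building toward.

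The paper sidesteps the obstacle entirely by a zero-extension trick. It extends $p$ by zero on $B_{2R}\setminus\Omega$, solves the divergence equation on the \emph{ball} $B_R$ (not on $\Omega_R$) to get $w \in \mathring{W}^1_2(B_R)^d$ with $\operatorname{div} w = p - (p)_{B_R}$ and $\|Dw\|_{L_2(B_R)} \le N(d)\|p - (p)_{B_R}\|_{L_2(B_R)}$; here the constant depends only on $d$ because the domain is a ball. Extending $w$ by zero outside $B_R$ gives a legitimate test function on $\Omega$, and the identity
\begin{equation*}
\int_{\Omega_R} p\, \operatorname{div} w \, dx = \int_{B_R} \big(p - (p)_{B_R}\big)\operatorname{div} w \, dx = \int_{B_R} |p - (p)_{B_R}|^2 \, dx
\end{equation*}
holds because the extended $p$ vanishes on $B_R \setminus \Omega$ and $\int_{B_R}\operatorname{div} w\,dx = 0$. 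Plugging this $w$ into the weak formulation then gives $\int_{B_R}|p-(p)_{B_R}|^2 \le N\int_{\Omega_R}|Du|^2 + N\int_{\Omega_R}|f^\alpha|^2$ directly, and the average $(p)_{B_R}$ is controlled by $R^{d(1-2/q)}\bigl(\int_{\Omega_R}|p|^q\bigr)^{2/q}$ via H\"older. This is the key maneuver you are missing: by extending $p$ by zero and solving the divergence problem on the full ball, you never need a Bogovskii-type operator on the irregular mixed-boundary set $\Omega_{2R}(x_0)$ at all. If you replace your pressure step with this device, the rest of your argument (the Caccioppoli step, the Sobolev--Poincar\'e substitution, the choice of $\varepsilon$ to hit the target $\theta$) goes through as you describe.
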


\begin{proof}
We prove only the case $x_0\in \partial \Omega$ because the other case is the same with obvious modifications.
Without loss of generality, we assume that $x_0=0$.
Let $R\in (0,R_0/8]$ and $\eta$ be a smooth function on $\bR^d$ satisfying
$$
0\le \eta\le 1, \quad \eta\equiv 1 \,\text{ on }\, B_{R}, \quad \operatorname{supp}\eta\subset B_{2R}, \quad |\nabla \eta|\le NR^{-1}.
$$
By applying $\eta^2(u-(u)_{\Omega_{2R}})$ as a test function to \eqref{0511@eq1},
and using both H\"older's and Young's inequalities, we obtain for $\theta\in (0,1)$ that
\begin{equation}		\label{170504@eq2}
\begin{aligned}
\int_{\Omega_{2R}}\eta^2|Du|^2\,dx&\le \frac{N}{R^2}\int_{\Omega_{2R}}|u-(u)_{\Omega_{2R}}|^2\,dx\\
&\quad +\theta \int_{\Omega_{2R}}|p|^2\,dx+N\int_{\Omega_{2R}}\big(|f^\alpha|^2+|g|^2\big)\,dx,
\end{aligned}
\end{equation}
where $N=N(d,\delta,\theta)$.

We extend $p$ by zero on $B_{2R}\setminus\Omega$.
From the existence of solutions to the divergence equation in a ball,
there exists $w\in \mathring{W}^1_2(B_{R})^d$ satisfying
\begin{equation}		\label{170504@eq3}
\begin{aligned}
&\operatorname{div}w=p-(p)_{B_{R}}\quad \text{in }\, B_{R},\\
&\|Dw\|_{L_2(B_{R})}\le N(d)\|p-(p)_{B_{R}}\|_{L_2(B_{R})}.
\end{aligned}
\end{equation}
We extend $w$ to be zero on $\Omega \setminus \Omega_R$ and apply $w$ as a test function to \eqref{0511@eq1} to get
$$
\int_{\Omega_{R}} p \operatorname{div}w\,dx=-\int_\Omega A^{\alpha\beta}D_\beta u \cdot D_\alpha w\,dx+\int_\Omega f^\alpha \cdot D_\alpha w\,dx.
$$
Using \eqref{170504@eq3} with the fact that
$$
\int_{\Omega_{R}}p\operatorname{div}w\,dx=\int_{B_{R}}(p-(p)_{B_{R}})\operatorname{div}w\,dx,
$$
we have
$$
\int_{B_{R}}|p-(p)_{B_{R}}|^2\,dx\le N\int_{\Omega_{R}}|Du|^2\,dx+N\int_{\Omega_{R}} |f^\alpha|^2\,dx,
$$
and thus, we get from \eqref{170504@eq2} that
\begin{align*}
\int_{\Omega_{R}}|p|^2\,dx
&\le \frac{N}{R^2}\int_{\Omega_{2R}}|u-(u)_{\Omega_{2R}}|^2\,dx+\theta \int_{\Omega_{2R}}|p|^2\,dx\\
&\quad +NR^{d(1-2/q)}\left(\int_{\Omega_{2R}}|p|^q\,dx\right)^{2/q}+N\int_{\Omega_{2R}}\big(|f^\alpha|^2+|g|^2\big)\,dx
\end{align*}
for any $\theta\in (0,1)$, where $N=N(d,\delta,q,\theta)$.
This together with \eqref{170504@eq2} yields
\begin{equation}		\label{170506@eq4}
\begin{aligned}
&\int_{\Omega_{R}}\big(|Du|^2+|p|^2\big)\,dx
\le \frac{N}{R^2}\int_{\Omega_{2R}}|u-(u)_{\Omega_{2R}}|^2\,dx
+\theta \int_{\Omega_{2R}}|p|^2\,dx\\
&\quad +NR^{d(1-2/q)}\left(\int_{\Omega_{2R}}|p|^q\,dx\right)^{2/q}+N\int_{\Omega_{2R}}\big(|f^\alpha|^2+|g|^2\big)\,dx.
\end{aligned}
\end{equation}
From H\"older's inequality and the Poincar\'e inequality in Theorem \ref{170503@thm1}, it follows that
\begin{align*}
\frac{1}{R^2}\int_{\Omega_{2R}}|u-(u)_{\Omega_{2R}}|^2\,dx&\le NR^{d(1-2/q)}\|u-(u)_{\Omega_{2R}}\|_{L_{\frac{dq}{d-q}}
(\Omega_{2R})}^2\\
&\le NR^{d(1-2/q)}\|Du\|_{L_q(\Omega_{4R})}^{2}.
\end{align*}
Combining \eqref{170506@eq4} and the above inequality, we conclude the desired estimate.
The lemma is proved.
\end{proof}

Using Lemma \ref{170225@lem1} and Gehring's lemma, we get the following reverse H\"older's inequality.

\begin{lemma}		\label{170506@lem1}
Let $q_1>2$ and $\Omega\subset \bR^d$ be a Reifenberg flat domain satisfying Assumption \ref{170226@ass1}.
Suppose that $(u,p)\in W^1_2(\Omega)^d\times L_2(\Omega)$ satisfies \eqref{0511@eq1} with $f^\alpha\in L_{q_1}(\Omega)^d$ and $g\in L_{q_1}(\Omega)$.
Then there exist constants $q_0\in (2,q_1)$ and $N>0$, depending only on $d$, $\delta$, and $q_1$
such that
$$
\big(|D\bar{u}|^{q_0}+|\bar{p}|^{q_0}\big)^{1/q_0}_{B_R(x_0)}\le N\big(|D\bar{u}|^2+|\bar{p}|^2\big)^{1/2}_{B_{2R}(x_0)}+N\big(|\bar{f}^\alpha|^{q_0}+|\bar{g}|^{q_0}\big)^{1/q_0}_{B_{2R}(x_0)}
$$
for any $x_0\in \bR^d$ and $R\in (0, R_0]$, where $D\bar{u}$, $\bar{p}$, $\bar{f}^\alpha$, and $\bar{g}$ are the extensions of $Du$, $p$, $f^\alpha$, and $g$ to $\bR^d$ so that they are zero on $\bR^d\setminus \Omega$.

\end{lemma}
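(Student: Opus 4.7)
The plan is to promote the estimate of Lemma \ref{170225@lem1} to a reverse H\"older-type inequality for the extended functions on arbitrary Euclidean balls in $\bR^d$, and then invoke Gehring's lemma. Set $F := |D\bar{u}|^2 + |\bar{p}|^2$ and $G := |\bar{f}^\alpha|^2 + |\bar{g}|^2$, and fix $q \in (2d/(d+2), 2)$, so that the exponent $q/2$ is strictly less than $1$. The first stage aims at an inequality of the shape
\[
\dashint_{B_R(x_0)} F\,dx \le \theta\, C \dashint_{B_{cR}(x_0)} F\,dx + N \Bigl(\dashint_{B_{cR}(x_0)} F^{q/2}\,dx\Bigr)^{2/q} + N\dashint_{B_{cR}(x_0)} G\,dx,
\]
valid for all $x_0 \in \bR^d$ and all $R$ bounded by a small fraction of $R_0$, with $c>1$ a numerical constant and $\theta \in (0,1)$ a parameter we can tune.

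To derive this I would split into three cases depending on the position of $B_{2R}(x_0)$ relative to $\Omega$. If $B_{2R}(x_0)\cap\Omega=\emptyset$ the extensions vanish and the inequality is trivial; if $B_{2R}(x_0)\subset\Omega$, Lemma \ref{170225@lem1} applies at the center $x_0$ since $\Omega_R(x_0) = B_R(x_0)$. In the remaining boundary case I would pick a point $y_0 \in \partial\Omega \cap \overline{B_{2R}(x_0)}$ and apply Lemma \ref{170225@lem1} at $y_0$ with radius $3R$; the inclusions $B_R(x_0)\cap\Omega \subset \Omega_{3R}(y_0)$ and $\Omega_{12R}(y_0) \subset B_{14R}(x_0)\cap\Omega$, together with the fact that the extensions are zero on $\bR^d\setminus\Omega$, convert each integral over an $\Omega$-piece into an integral over a Euclidean ball centered at $x_0$ of comparable volume, yielding the displayed inequality with a universal constant $c$ (say $c = 14$).

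In the second stage I would choose $\theta$ small enough that $\theta C$ falls below the admissible threshold in a Gehring--Giaquinta--Modica type lemma, which tolerates a small coefficient times $\dashint_{B_{cR}(x_0)} F\,dx$ on the right and still delivers higher integrability; this is legitimate because $\theta$ is a free parameter in Lemma \ref{170225@lem1}. That lemma then produces an exponent $q_0 \in (2,q_1)$ and the reverse H\"older inequality
\[
\Bigl(\dashint_{B_R(x_0)} F^{q_0/2}\,dx\Bigr)^{2/q_0} \le N \dashint_{B_{2R}(x_0)} F\,dx + N \Bigl(\dashint_{B_{2R}(x_0)} G^{q_0/2}\,dx\Bigr)^{2/q_0},
\]
whose square root is exactly the claimed bound; the extension of the admissible radius from a small fraction of $R_0$ to all of $(0,R_0]$ is a routine covering argument. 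The main obstacle is the boundary case of the first stage: the set $\Omega_R(x_0)$ at a boundary-adjacent point $x_0$ is not comparable to a Euclidean ball, and the crucial observation is that extending the unknowns by zero across $\partial\Omega$ allows one to transplant the estimate centered at a nearby boundary point $y_0$ into an estimate centered at $x_0$ without disturbing the Gehring-ready form of the inequality.
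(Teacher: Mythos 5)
Your proposal follows essentially the same route as the paper's proof: both apply Lemma \ref{170225@lem1} via the same three-case dichotomy (empty intersection, interior ball, boundary-adjacent ball) and the same geometric inclusions with enlargement factor $14$, then tune $\theta$ small and invoke Gehring's lemma on the zero-extended functions. The only organizational difference is that the paper performs an explicit covering step to simultaneously reduce the enlargement factor to $2$, absorb the $\theta$-term into a coefficient $1/2$, and extend the radius range to $(0,R_0]$ before applying Gehring, whereas you rely on a Gehring variant that tolerates the factor $14$ and the small $\theta$-coefficient directly and defer the radius extension to a covering argument at the end; both readings of Gehring's lemma are standard.
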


\begin{proof}
We fix a constant $q\in (2d/(d+2),2)$, and set
$$
\Phi=\big(|D\bar{u}|^2+|\bar{p}|^2\big)^{q/2}, \quad \Psi=\big(|\bar{f}^\alpha|^2+|\bar{g}|^2\big)^{q/2}.
$$
Then, by Lemma \ref{170225@lem1}, it follows that
\begin{equation}		\label{170509@eq1}
\begin{aligned}
\int_{B_R(x_0)}\Phi^{2/q}\,dx&\le \theta\int_{B_{14R}(x_0)}\Phi^{2/q}\,dx\\
&\quad +NR^{d(1-2/q)}\left(\int_{B_{14R}(x_0)}\Phi\,dx\right)^{2/q}+N\int_{B_{14R}(x_0)}\Psi^{2/q}\,dx
\end{aligned}
\end{equation}
for any $x_0\in \bR^d$, $R\in (0, R_0/24]$, and $\theta\in (0,1)$, where $N=N(d,\delta,\theta)$.
Indeed, if $B_{2R}(x_0)\subset \Omega$, then \eqref{170509@eq1} follows from Lemma \ref{170225@lem1}.
In the case when $B_{2R}(x_0)\cap \partial \Omega\neq \emptyset$, there exists $y_0\in \partial \Omega$ such that $|x_0-y_0|=\operatorname{dist}(x_0,\partial \Omega)\le 2R$ and
$$
B_{R}(x_0)\subset B_{3R}(y_0)\subset B_{12R}(y_0)\subset B_{14R}(x_0).
$$
Using Lemma \ref{170225@lem1} and the fact that $3R\le R_0/8$, we obtain \eqref{170509@eq1} with $B_{3R}(y_0)$ and $B_{12R}(y_0)$ in place of $B_{R}(x_0)$ and $B_{14R}(x_0)$, respectively.
Hence, we get the inequality \eqref{170509@eq1}.
If $B_{2R}(x_0)\subset \bR^d\setminus \Omega$, by the definition of $D\bar{u}$ and $\bar{p}$, \eqref{170509@eq1} trivially holds.

For $x_0\in \bR^d$ and $R\in (0, R_0]$, using a covering argument and  \eqref{170509@eq1} with $y\in B_R(x_0)$ and $R/24$ in place of $x_0$ and $R$, respectively, and taking a sufficiently small $\theta$, we have
$$
\begin{aligned}
\dashint_{B_R(x_0)}\Phi^{2/q}\,dx&\le \frac{1}{2}\dashint_{B_{2R}(x_0)}\Phi^{2/q}\,dx\\
&\quad +N\left(\dashint_{B_{2R}(x_0)}\Phi\,dx\right)^{2/q}+N\dashint_{B_{2R}(x_0)}\Psi^{2/q}\,dx,
\end{aligned}
$$
where $N=N(d,\delta)$.
Therefore, by Gehring's lemma (see, for instance, \cite[Ch. V]{MR717034}), we get the desired estimate.
The lemma is proved.
\end{proof}

\section{$L_\infty$ and H\"older estimates}		\label{S4}

In this section, we prove $L_\infty$-estimates of $Du$ and $p$.
We set
$$
\cL_0 u=D_\alpha(A^{\alpha\beta}_0D_\beta u),
$$
where $A_0^{\alpha\beta}=A_0^{\alpha\beta}(x_1)$ satisfy \eqref{160802@eq6a}.
We also denote by $\cB_0 u=A_0^{\alpha\beta}D_\beta un_\alpha$ the conormal derivative of $u$ associated with $\cL_0$.

We start with the following boundary estimates.
For the corresponding interior estimates, see \cite[Section 3]{arXiv:1604.02690v2}.

\begin{lemma}		\label{170509@lem5}
Let $R>0$ and $(u,p)\in W^1_2(B_R^+)^d\times L_2(B_R^+)$ satisfy
\begin{equation}		\label{170509@EQ3}
\left\{
\begin{aligned}
\operatorname{div}  u=0 \quad &\text{in }\ B_R^+,\\
\cL u+\nabla p= 0 \quad &\text{in }\ B_R^+,\\
\cB u+np= 0 \quad &\text{on }\ B_R\cap \partial \bR^d_+.
\end{aligned}
\right.
\end{equation}
Then we have
\begin{equation}		\label{170509@EQ2}
\int_{B_R^+}|p|^2\,dx\le N\int_{B_R^+}|Du|^2\,dx,
\end{equation}
where $N=N(d,\delta)$.
Moreover, for any $0<r<R$, we have
\begin{equation}		\label{170509@EQ1}
\int_{B_r^+}|Du|^2\,dx\le \frac{N}{(R-r)^2}\int_{B_R^+}|u|^2\,dx,
\end{equation}
where $N=N(d,\delta)$.
\end{lemma}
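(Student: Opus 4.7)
The plan is to establish the pressure bound \eqref{170509@EQ2} first and then use it to derive the Caccioppoli-type estimate \eqref{170509@EQ1}.

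For \eqref{170509@EQ2}, the strategy is to produce a test function $w \in W^1_2(B_R^+)^d$ satisfying $\operatorname{div} w = p$ \emph{exactly} (not $p-(p)_{B_R^+}$) together with $w=0$ on the spherical piece $\partial B_R \cap \overline{\bR^d_+}$ of $\partial B_R^+$. Such a $w$ is admissible in the weak formulation of \eqref{170509@EQ3}, because the conormal condition is only imposed on the flat piece $B_R\cap\partial\bR^d_+$, so test functions are free on the rest. To construct $w$, I would extend $p$ to $B_R$ by odd reflection across $\{x_1=0\}$, setting $\hat p(x_1,x') = \operatorname{sgn}(x_1)\,p(|x_1|,x')$. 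Then $\hat p \in L_2(B_R)$ has zero mean, so by the standard Bogovski\u{\i} solvability of the divergence equation on a ball there is $\hat w \in \mathring W^1_2(B_R)^d$ with $\operatorname{div}\hat w = \hat p$ and $\|D\hat w\|_{L_2(B_R)} \le N(d)\|p\|_{L_2(B_R^+)}$. Setting $w=\hat w|_{B_R^+}$ gives the desired test function. Testing \eqref{170509@EQ3} against $w$ yields $\|p\|_{L_2(B_R^+)}^2 = -\int_{B_R^+} A^{\alpha\beta} D_\beta u \cdot D_\alpha w\,dx$, and \eqref{170509@EQ2} follows from \eqref{160802@eq6a} and the Cauchy--Schwarz inequality.

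For \eqref{170509@EQ1}, the plan is a Caccioppoli argument combined with iteration. For $r \le \rho_1 < \rho_2 \le R$, I would pick a cutoff $\eta$ supported in $B_{\rho_2}$ with $\eta\equiv 1$ on $B_{\rho_1}$ and $|\nabla \eta| \le N/(\rho_2-\rho_1)$, and use $\eta^2 u$ as a test function (admissible because $\eta$ vanishes near $\partial B_R$). Since $\operatorname{div} u = 0$, the pressure contribution reduces to $2\int \eta p\, u\cdot\nabla\eta\,dx$. Applying ellipticity on the left and Young's inequality on the right to absorb the $\int\eta^2|Du|^2$ term, one obtains
\[
\int_{B_{\rho_1}^+} |Du|^2 \,dx \le \frac{N}{(\rho_2-\rho_1)^2}\int_{B_{\rho_2}^+} |u|^2\,dx + N\varepsilon \int_{B_{\rho_2}^+} |p|^2\,dx.
\]
Since $(u,p)$ restricted to $B_{\rho_2}^+$ remains a weak solution of the same system (any admissible test function on $B_{\rho_2}^+$ extends by zero to an admissible one on $B_R^+$), \eqref{170509@EQ2} applied on $B_{\rho_2}^+$ gives $\int_{B_{\rho_2}^+}|p|^2\,dx \le C(d,\delta)\int_{B_{\rho_2}^+}|Du|^2\,dx$. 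Choosing $\varepsilon$ so that $N\varepsilon C \le 1/2$ yields
\[
\int_{B_{\rho_1}^+} |Du|^2\,dx \le \tfrac12 \int_{B_{\rho_2}^+} |Du|^2\,dx + \frac{N'}{(\rho_2-\rho_1)^2}\int_{B_R^+}|u|^2\,dx,
\]
and the standard iteration lemma (see, for instance, \cite[Ch.~V]{MR717034}) closes the estimate.

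The main obstacle is the construction in the first step: the test function's divergence must be $p$ itself, not $p-(p)_{B_R^+}$, because the conormal boundary condition on the flat piece does \emph{not} pin down the mean of $p$ over $B_R^+$. A Bogovski\u{\i} construction applied directly to $B_R^+$ would force the test function to vanish on all of $\partial B_R^+$ and therefore produce only $\operatorname{div} w = p-(p)_{B_R^+}$, which leaves the mean of $p$ uncontrolled. The odd-reflection device circumvents this by moving to the full ball $B_R$, where $\hat p$ automatically has mean zero, while exploiting the fact that admissible test functions on $B_R^+$ need not vanish on the flat face $\{x_1=0\}$.
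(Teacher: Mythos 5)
Your proof is correct and takes essentially the same route as the paper's: the paper likewise extends $p$ to all of $B_R$ with zero mean and $L_2$ norm comparable to $\|p\|_{L_2(B_R^+)}$ (your odd reflection being one natural realization of that unspecified extension), solves $\operatorname{div} w = p$ on the whole ball with $w \in \mathring{W}^1_2(B_R)^d$, and tests the weak formulation with $w$. For \eqref{170509@EQ1} the paper simply cites the Caccioppoli-plus-iteration argument from \cite{arXiv:1604.02690v2}, which is precisely what you carry out, including the correct observation that the restriction of $(u,p)$ to $B_{\rho_2}^+$ remains a weak solution so that \eqref{170509@EQ2} can be applied at the smaller scale.
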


\begin{remark}
In the above lemma and throughout the paper, $(u,p)\in W^1_2(B^+_R)^d\times L_2(B_R^+)$ is said to satisfy \eqref{170509@EQ3} if
$$
\int_{B_R^+}A^{\alpha\beta}D_\beta u\cdot D_\alpha \phi\,dx+\int_{B_R^+}p \operatorname{div}\phi\,dx=0
$$
for any $\phi\in \mathring{W}^1_2(B_R)^d$.
It is clear that, as a test function, one can also use $\phi \in W_2^1(B_R^+)^d$ such that $\phi = 0$ on $\partial B_R \cap \bR^d_+$.
\end{remark}

\begin{proof}[Proof of Lemma \ref{170509@lem5}]
To show \eqref{170509@EQ2},
we extend $p$ to $B_R$ so that $(p)_{B_R}=0$ and $\|p\|_{L_2(B_R)}$ is comparable to $\|p\|_{L_2(B_R^+)}$.
By the existence of solutions to the divergence equation in a ball, there exists $w\in \mathring{W}^1_2(B_R)^d$ satisfying
$$
\operatorname{div}w= p \, \text{ in }\, B_R,
\quad  \|Dw\|_{L_2(B_R)}\le N(d)\|p\|_{L_2(B_R^+)}.
$$
Applying $w$ as a test function to \eqref{170509@EQ3}, we have
$$
\int_{B_R^+}|p|^2\,dx= -\int_{B_R^+}A^{\alpha\beta}D_\beta u\cdot D_\alpha w\,dx,
$$
and thus, we get  \eqref{170509@EQ2}.
The inequality \eqref{170509@EQ1} is deduced from \eqref{170509@EQ2} in the same way as \cite[Lemma 3.7]{arXiv:1604.02690v2} is deduced from \cite[Lemmas 3.4 and 3.6]{arXiv:1604.02690v2}.
The lemma is proved.
\end{proof}

Using the standard finite difference argument, we obtain the following estimates for $DD_{x'}u$ and $D_{x'}p$.
The corresponding interior estimates can be found in \cite[Lemmas 4.1 and 4.2]{arXiv:1604.02690v2}.

\begin{lemma}		\label{160813@lem1}
Let $0<r<R$
and $(u,p)\in W^1_2(B_R^+)^d\times L_2(B_R^+)$ satisfies
$$
\left\{
\begin{aligned}
\operatorname{div}  u=0 \quad &\text{in }\ B_R^+,\\
\cL_0 u+\nabla p= 0 \quad &\text{in }\ B_R^+,\\
\cB_0 u+np=0 \quad &\text{on }\ B_R\cap \partial \bR^d_+.
\end{aligned}
\right.
$$
Then we have
$$
\int_{B_r^+}|DD_{x'}u|^2\,dx+\int_{B_r^+}|D_{x'}p|\,dx\le \frac{N}{(R-r)^2}\int_{B_R^+}|Du|^2\,dx,
$$
where $N=N(d,\delta)$.
\end{lemma}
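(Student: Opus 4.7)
The plan is to carry out the standard Nirenberg difference quotient argument in tangential directions, exploiting the crucial fact that $A_0^{\alpha\beta}$ depends only on $x_1$, so tangential translations leave the system invariant. Fix $i \in \{2,\ldots,d\}$ and a small $h>0$, and define the tangential difference quotients
$$
v_h(x) = \frac{u(x + h e_i) - u(x)}{h}, \qquad q_h(x) = \frac{p(x + h e_i) - p(x)}{h}.
$$
Because $A_0^{\alpha\beta}(x_1)$ is unaffected by the shift $x \mapsto x + h e_i$, and because this shift preserves both the hyperplane $\{x_1 = 0\}$ and the outward unit normal there, the pair $(v_h, q_h)$ satisfies exactly the same homogeneous Stokes system with the same conormal boundary condition on $B_{R-h} \cap \partial\bR^d_+$ as $(u,p)$ does.

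The strategy is then to apply Lemma \ref{170509@lem5} directly to $(v_h, q_h)$. Pick an intermediate radius $\rho = (R+r)/2$ and take $h$ small enough that $\rho < R - h$. The Caccioppoli-type estimate \eqref{170509@EQ1} and the pressure estimate \eqref{170509@EQ2}, applied to $(v_h, q_h)$ on $B_\rho^+$ and $B_r^+$ respectively, give
$$
\int_{B_r^+}|Dv_h|^2\,dx \le \frac{N}{(\rho-r)^2}\int_{B_\rho^+}|v_h|^2\,dx, \qquad \int_{B_r^+}|q_h|^2\,dx \le N\int_{B_r^+}|Dv_h|^2\,dx.
$$
Combining these with the standard difference-quotient inequality $\|v_h\|_{L_2(B_\rho^+)} \le \|D_i u\|_{L_2(B_R^+)}$ yields the uniform-in-$h$ bound
$$
\int_{B_r^+}|Dv_h|^2\,dx + \int_{B_r^+}|q_h|^2\,dx \le \frac{N}{(R-r)^2}\int_{B_R^+}|Du|^2\,dx.
$$

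Finally, pass to the limit $h\to 0$. The uniform bound gives weak $L_2$ subsequential limits, which by standard difference-quotient theory must equal $D_i u$, $DD_i u$, and $D_i p$ respectively; indeed, this simultaneously proves that $DD_{x'}u$ and $D_{x'}p$ exist in $L_2(B_r^+)$. Lower semicontinuity of the $L_2$ norm under weak convergence preserves the estimate, and summing over $i = 2,\ldots,d$ yields the conclusion (with the $|D_{x'}p|$ term read as $|D_{x'}p|^2$, which is evidently the intended exponent). The argument has no serious obstacle beyond the routine bookkeeping of radii $r < \rho < R-h$; the essential point — and the reason a purely tangential argument works — is that the system is translation invariant in the $x'$ directions, so no commutator with the coefficients arises when one differences $u$ tangentially.
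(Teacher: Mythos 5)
Your proof is correct and follows essentially the same route as the paper: take tangential difference quotients (which solve the same system by translation invariance in $x'$), apply the Caccioppoli and pressure estimates of Lemma \ref{170509@lem5} on nested half-balls, and pass to the limit $h\to 0$ by the standard difference-quotient argument. You are also right that $|D_{x'}p|$ in the statement is a typo for $|D_{x'}p|^2$.
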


\begin{proof}
Denote by $\delta_{i,h}f$ the $i$-th difference quotient of $f$ with step size $h$, i.e.,
$$
\delta_{i,h}f(x)=\frac{f(x+he_i)-f(x)}{h}.
$$
Let
$$
R_1=\frac{R+r}{2} \quad \text{and} \quad 0<|h|<\frac{R-r}{2}.
$$
Since the coefficients are functions of only $x_1$, we obtain that
$$
\left\{
\begin{aligned}
\operatorname{div} (\delta_{i,h}u)=0 \quad &\text{in }\ B_{R_1}^+,\\
\cL_0 (\delta_{i,h}u)+\nabla (\delta_{i,h}p)= 0 \quad &\text{in }\ B_{R_1}^+,\\
\cB_0 (\delta_{i,h}u)+n(\delta_{i,h}p)=0 \quad &\text{on }\ B_{R_1}\cap \partial \bR^d_+,
\end{aligned}
\right.
$$
where $i=2,\ldots,d$.
Then, by Lemma \ref{170509@lem5}, we have
$$
\int_{B_r^+}|\delta_{i,h}p|^2\,dx\le N\int_{B_r^+}|D(\delta_{i,h}u)|^2\,dx\le \frac{N}{(R-r)^2}\int_{B_{R_1}^+}|\delta_{i,h}u|^2\,dx,
$$
where $N=N(d,\delta)$.
This, along with the standard finite difference argument (see, for instance, \cite[Section 5.8.2]{MR2597943}), implies the desired estimate.
\end{proof}

In the lemma below, we obtain $L_\infty$-estimates for $Du$ and $p$ when $(u,p)$ is a weak solution of $\cL_0u+\nabla p=0$.
We also prove H\"older semi-norm estimates for linear combinations of derivatives of $u$.
Indeed, we do not use H\"older semi-norm estimates in this paper, but we present here the results for later use of the estimates in the study of weighted $L_q$-estimates.
For the Dirichlet counterpart of the estimates and their application to $L_q$-estimates with Muckenhoupt weights, see \cite{arXiv:1702.07045v1}.

As usual, the H\"older semi-norm of $u$ is defined by
$$
[u]_{C^\gamma(\Omega)}=\sup_{\substack{x,y\in \Omega \\ x\neq y}} \frac{|u(x)-u(y)|}{|x-y|^\gamma}.
$$
Using the fact that $A_0^{\alpha\beta}$ are independent of $x' \in \bR^{d-1}$, we observe that
\begin{equation}		\label{170613@eq1}
\cL_0u+\nabla p=D_1 U+\sum_{\alpha=2}^d(A^{\alpha\beta}_0D_{\alpha\beta}u+D_\alpha p e_\alpha),
\end{equation}
if $u$ and $p$ are sufficiently smooth, where $e_\alpha$ is the $\alpha$-th unit vector in $\bR^d$ and
$$
U:=A^{1\beta}_0D_\beta u+p e_1.
$$
In other words, we have
$$
U_1=(A^{1\beta}_0)_{1j}D_\beta u_j+p, \quad U_i=(A^{1\beta}_0)_{ij}D_\beta u_j, \quad i=2,\ldots,d.
$$

\begin{lemma}		\label{160815@lem1}
There exists a constant $N=N(d,\delta)$ such that the following hold.
\begin{enumerate}[$(a)$]
\item
If $(u,p)\in W^1_2(B_2)^d\times L_2(B_2)$ satisfies
\begin{equation*}
\left\{
\begin{aligned}
\operatorname{div} u=0 \quad &\text{in }\ B_2,\\
\cL_0 u+\nabla p= 0 \quad &\text{in }\ B_2,
\end{aligned}
\right.
\end{equation*}
then we have
$$
\|Du\|_{L_\infty(B_1)}+[D_{x'}u]_{C^{1/2}(B_1)}+[U]_{C^{1/2}(B_1)}\le N\|Du\|_{L_2(B_2)},
$$
$$
\|p\|_{L_\infty(B_1)}\le N\|Du\|_{L_2(B_2)}+N\|p\|_{L_2(B_2)}.
$$

\item
If $(u,p)\in W^1_2(B_2^+)^d\times L_2(B_2^+)$ satisfies
\begin{equation*}
\left\{
\begin{aligned}
\operatorname{div} u=0 \quad &\text{in }\ B_2^+,\\
\cL_0 u+\nabla p= 0 \quad &\text{in }\ B_2^+,\\
\cB_0 u+n p=0 \quad &\text{on }\, B_2\cap \partial \bR^d_+,
\end{aligned}
\right.
\end{equation*}
then we have
$$
\|Du\|_{L_\infty(B_1^+)}+\|p\|_{L_\infty(B_1^+)}+[D_{x'}u]_{C^{1/2}(B_1^+)}+[U]_{C^{1/2}(B_1^+)}\le N\|Du\|_{L_2(B_2^+)}.
$$
\end{enumerate}
\end{lemma}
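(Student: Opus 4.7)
The plan is to bootstrap tangential smoothness via iterated finite differences, then convert the resulting $L_2$ bounds into pointwise control via Sobolev embedding, exploiting the special structure of the quantity $U$ to handle the $x_1$-direction where the coefficients are only measurable. I describe case (b); case (a) is handled identically, using the interior finite-difference estimates from \cite{arXiv:1604.02690v2} in place of Lemma \ref{160813@lem1}.

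First I would iterate Lemma \ref{160813@lem1}. For each tangential index $i\in\{2,\ldots,d\}$, the pair $(\delta_{i,h}u,\delta_{i,h}p)$ satisfies the same homogeneous system on a slightly smaller half-ball, so applying the lemma repeatedly on a chain of radii $1<r_k<2$ gives, for every $k\ge 0$,
\[
\|DD_{x'}^k u\|_{L_2(B_{r_k}^+)} + \|D_{x'}^k p\|_{L_2(B_{r_k}^+)} \le N_k \|Du\|_{L_2(B_2^+)},
\]
with $N_k$ depending only on $d$, $\delta$, and $k$. This supplies arbitrarily high tangential regularity for both $u$ and $p$.

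Next I would use the system itself to extract $x_1$-regularity of the key quantities. Incompressibility yields $D_1 u_1 = -\sum_{\alpha\ge 2}D_\alpha u_\alpha$, while \eqref{170613@eq1} gives
\[
D_1 U = -\sum_{\alpha=2}^d\bigl(A_0^{\alpha\beta}D_\alpha D_\beta u + D_\alpha p\,e_\alpha\bigr).
\]
Together with $D_{x'}U = A_0^{1\beta}D_\beta D_{x'}u + D_{x'}p\,e_1$, this expresses $DU$ entirely through $DD_{x'}u$ and $D_{x'}p$, so iterating gives $L_2$ bounds on $DD_{x'}^k U$ for every $k$. Slicewise Sobolev embedding $W^m_2(B_1')\hookrightarrow C^{1/2}(\overline{B_1'})$, with $m$ large depending on $d$, then converts these into uniform tangential $C^{1/2}$ estimates on $B_1^+$ for $D_{x'}u$ and $U$; combining with $L_2$ control of $D_1$-derivatives of the same quantities yields joint $C^{1/2}$ control. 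The $L_\infty$ bounds on $p$ and on the remaining component $D_1 u$ of $Du$ follow by inverting the linear relation $U = A_0^{1\beta}D_\beta u + p\,e_1$ via the strong ellipticity \eqref{160802@eq6a}, which makes $A_0^{11}$ invertible and allows solving for $D_1 u$ and $p$ in terms of $U$ and $D_{x'}u$. In case (b), the conormal condition reads $U = 0$ on $B_2\cap\partial\bR^d_+$, which is consistent with this picture and permits tangential-difference test functions up to the flat boundary in the iteration.

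The main obstacle is precisely that $A_0^{\alpha\beta}$ depends only measurably on $x_1$, so $x_1$-derivatives of $u$ and $p$ cannot be bootstrapped from the PDE in the usual way. The design of $U$ sidesteps this: by construction $D_1 U$ is expressible through \eqref{170613@eq1} purely in terms of tangential derivatives of $u$ and $p$, so the coefficient irregularity in $x_1$ never enters the differentiated system, and the tangential finite-difference bootstrap suffices.
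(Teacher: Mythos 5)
Your proposal is correct and follows essentially the same route as the paper's proof: iterate the tangential finite-difference lemma to get $L_2$ bounds on $DD_{x'}^k u$ and $D_{x'}^k p$, use $\operatorname{div} u=0$ for $D_1 u_1$, exploit the identity \eqref{170613@eq1} to bound $D_1 D_{x'}^k U$ without differentiating the coefficients in $x_1$, apply the anisotropic Sobolev embedding, and then recover $D_1 u_j$ ($j\ge 2$) and $p$ by inverting the algebraic relation defining $U$ via ellipticity. The only imprecision is that what must be inverted is the principal submatrix $\{(A_0^{11})_{ij}\}_{i,j=2}^d$ (nondegenerate since $A_0^{11}$ is positive definite by \eqref{160802@eq6a}), not the full matrix $A_0^{11}$, but this does not affect the argument.
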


\begin{proof}
For the proof of the assertion $(a)$, we refer to \cite[Lemma 4.1 $(a)$]{arXiv:1702.07045v1}.
To prove the assertion $(b)$, we let $r_1\in (1,2)$ and $i=2,\ldots,d$.
By Lemma \ref{160813@lem1}, we have
$$
(D_{i}u,D_{i}p)\in W^1_2(B^+_{r_1})^d\times L_2(B_{r_1}^+)
$$
and
$$
\int_{B_{r_1}^+}|DD_{i}u|^2\,dx+\int_{B_{r_1}^+}|D_{i}p|^2\,dx\le N\int_{B_2^+}|Du|^2\,dx,
$$
where $N=N(d,\delta,r_1)$.
Moreover, $(D_i u,D_ip)$ satisfies
$$
\left\{
\begin{aligned}
\operatorname{div} (D_iu)=0 \quad &\text{in }\ B_{r_1}^+,\\
\cL_0 (D_i u)+\nabla (D_ip)= 0 \quad &\text{in }\ B_{r_1}^+,\\
\cB_0 (D_iu)+n (D_ip)=0 \quad &\text{on }\, B_{r_1}\cap \partial \bR^d_+.
\end{aligned}
\right.
$$
We then use Lemma \ref{160813@lem1} again as above with $r_2$ in place of $r_1$ and with $r_1$ in place of $2$, where $1<r_2<r_1$.
By repeating this process, we obtain  that
$$
(D_{i}^ku,D_i^kp)\in W^1_2(B_r^+)^d\times L_2(B_r^+)
$$
and
$$
\|DD_{i}^ku\|_{L_2(B_r^+)}+\|D_{i}^kp\|_{L_2(B_r^+)}\le N\|Du\|_{L_2(B_2^+)}
$$
for any $r\in [1,2)$ and $k\in \{1,2,\ldots\}$, where $N=N(d,\delta,r,k)$.
Since the above inequality holds for $i=2,\ldots,d$, we have
\begin{equation}		\label{160815@eq8a}
\|DD_{x'}^ku\|_{L_2(B_r^+)}+\|D_{x'}^kp\|_{L_2(B_r^+)}\le N\|Du\|_{L_2(B_2^+)}
\end{equation}
for any $r\in [1,2)$ and $k\in \{0,1,2,\ldots\}$, where we used Lemma \ref{170509@lem5} for the case when $k=0$.
Then, using \eqref{160815@eq8a} and an anisotropic Sobolev embedding theorem with $k>(d-1)/2$ (see, for instance, the proof of \cite[Lemma 3.5]{MR2800569}), we get
$$
\|D_{x'}u\|_{L_\infty(B_1^+)}+[D_{x'}u]_{C^{1/2}(B_1^+)}\le N\|Du\|_{L_2(B_2^+)}.
$$
Using the relation $\operatorname{div} u=0$, we get from the above inequality that
\begin{equation}		\label{170612@eq1}
\|D_1u_1\|_{L_\infty(B_1^+)}+\|D_{x'}u\|_{L_\infty(B_1^+)}+[D_{x'}u]_{C^{1/2}(B_1^+)}\le N\|Du\|_{L_2(B_2^+)}.
\end{equation}

Now we are ready to prove the assertion $(b)$.
From the definition of $U$ and \eqref{160815@eq8a}, it follows that
\begin{align}
\nonumber
\|D_{x'}^k U\|_{L_2(B_1^+)}&\le N\|DD_{x'}^k u\|_{L_2(B_1^+)}+N\|D_{x'}^kp\|_{L_2(B_1^+)}\\
\label{170613@eq1a}
&\le N\|Du\|_{L_2(B_2^+)}
\end{align}
for any $k\in \{0,1,2,\ldots\}$, where $N=N(d,\delta,k)$.
Since $\cL_0u+\nabla p=0$, we obtain by \eqref{170613@eq1} that
$$
D_1U=-\sum_{\alpha=2}^d(A_0^{\alpha\beta}D_{\alpha\beta}u+D_\alpha p e_\alpha).
$$
This together with \eqref{160815@eq8a} yields that $D_1 U$ has sufficiently many derivatives in $x'$ with the estimates
\begin{equation}		\label{170613@eq2}
\|D_1 D_{x'}^k U\|_{L_2(B_1^+)}\le N\|Du\|_{L_2(B_2^+)}
\end{equation}
for any $k\in \{0,1,2,\ldots\}$.
Combining \eqref{170613@eq1a} and \eqref{170613@eq2}, and using the anisotropic Sobolev embedding as above with $k>(d-1)/2$, we have
\begin{equation}		\label{170613@eq3}
\|U\|_{L_\infty(B_1^+)}+[U]_{C^{1/2}(B_1^+)}\le N\|Du\|_{L_2(B_2^+)}.
\end{equation}
Notice from the definition of $U$ that
$$
\sum_{j=2}^d(A^{11}_0)_{ij}D_1u_j=U_i-\sum_{j=1}^d\sum_{\beta=2}^d(A^{1\beta}_0)_{ij}D_\beta u_j-(A^{11}_0)_{i1}D_1u_1, \quad i=2,\ldots,d.
$$
By the ellipticity condition \eqref{160802@eq6a}, $\{A_0\}_{i,j=2}^d$ is nondegenerate, which implies that
$$
\sum_{j=2}^d|D_1u_j(x)|\le N\sum_{i=2}^d |U_i(x)|+N|D_{x'}u(x)|+N|D_1u_1(x)|
$$
for all $x\in B_1^+$, where $N=N(d,\delta)$.
Taking $\|\cdot \|_{L_\infty(B_1^+)}$ of both sides of the above inequality and using \eqref{170612@eq1} and \eqref{170613@eq3}, we conclude that
$$
\|Du\|_{L_\infty(B_1^+)}\le N\|Du\|_{L_2(B_2^+)}.
$$
From this, \eqref{170613@eq3}, and the fact that
$$
p =U_1-(A^{1\beta}_0)_{1j}D_\beta u_j,
$$
we get
$$
\|p\|_{L_\infty(B_1^+)}\le N\|Du\|_{L_2(B_2^+)}.
$$
The lemma is proved.
\end{proof}

\begin{remark}
Following the proof of \cite[Lemma 4.2]{arXiv:1702.07045v1}, we can easily observe that the estimates in Lemma \ref{160815@lem1} still hold under the assumption that $(u,p) \in W_q^1(B_2)^d \times L_q(B_2)$ or $(u,p) \in W_q^1(B_2^+)^d \times L_q(B_2^+)$, where $q \in (1,\infty)$.
Moreover, the $L_2$ norms on the right-hand side of the estimates can be replaced by the corresponding $L_1$ norms.
\end{remark}

\section{$L_q$-estimates for Stokes system}		\label{S5}

\begin{proposition}		\label{170509_prop1}
Suppose that Assumption \ref{170219@ass2} $(\gamma)$ holds with $\gamma\in (0,1/48]$.
Let $q\in (2,\infty)$ and $(u, p)\in W^1_2(\Omega)^d\times L_2(\Omega)$ satisfy
\begin{equation}		\label{170511@eq2}
\left\{
\begin{aligned}
\operatorname{div} u=g &\quad \text{in }\, \Omega,\\
\cL u+\nabla p=D_\alpha f^\alpha &\quad \text{in }\, \Omega,\\
\cB u+np=f^\alpha n_\alpha &\quad \text{on }\, \partial \Omega,
\end{aligned}
\right.
\end{equation}
where $f^\alpha\in L_q(\Omega)^d$ and $g\in L_q(\Omega)$.
Then, for $x_0\in \overline{\Omega}$ and $R\in (0,R_0/2]$ satisfying either
$$
B_R(x_0)\subset \Omega \quad \text{or}\quad x_0\in \partial \Omega,
$$
there exist
$$
(W,p_1), \, (V,p_2)\in L_2(\Omega_R(x_0))^{d\times d}\times L_2(\Omega_R(x_0))
$$
such that $(Du,p)=(W,p_1)+(V,p_2)$ in $\Omega_R(x_0)$ and
\begin{align}
\nonumber
&(|W|^2+|p_1|^2)^{\frac{1}{2}}_{\Omega_R(x_0)}\\
\label{170510@eq1b}
&\quad \le N\gamma^{\frac{1}{2\nu}}(|Du|^2+|p|^2)^{\frac{1}{2}}_{\Omega_{2R}(x_0)}+N(|f^\alpha|^{2\mu}+|g|^{2\mu})^{\frac{1}{2\mu}}_{\Omega_{2R}(x_0)},\\
\nonumber
&\|V\|_{L_\infty(\Omega_{R/4}(x_0))}+\|p_2\|_{L_\infty(\Omega_{R/4}(x_0))}\\
\label{170510@eq1c}
&\quad \le N\big(\gamma^{\frac{1}{2\nu}}+1\big)(|Du|^2+|p|^2)^{\frac{1}{2}}_{\Omega_{2R}(x_0)}+N(|f^\alpha|^{2\mu}+|g|^{2\mu})^{\frac{1}{2\mu}}_{\Omega_{2R}(x_0)},
\end{align}
where $\mu,\, \nu >1$, $2\mu<q$, and $1/\mu+1/\nu=1$.
Here, the constants $\mu$, $\nu$, and $N>0$ depend only on $d$, $\delta$, and $q$.
\end{proposition}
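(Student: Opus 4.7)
I plan to prove Proposition 5.1 by the frozen-coefficients decomposition scheme of \cite[Section 4]{arXiv:1604.02690v2}, adapted to the conormal boundary condition. At the given $x_0$ and scale $R$, I will replace $A^{\alpha\beta}$ by its $x'$-average
\[
A_0^{\alpha\beta}(x_1) := \dashint_{B'_R(x'_0)} A^{\alpha\beta}(x_1, y')\,dy',
\]
so that the operators $\cL_0$, $\cB_0$ depend only on $x_1$. The good piece $(V,p_2)$ will arise from an auxiliary $\cL_0$-Stokes solution, controlled in $L_\infty$ via Lemma \ref{160815@lem1}, while the bad remainder $(W,p_1) = (Du,p) - (V,p_2)$ will be controlled in $L_2$ via Lemma \ref{170224@lem2}, the partially-BMO smallness $\dashint_{B_R}|A - A_0| \le N\gamma$, and the reverse H\"older inequality of Lemma \ref{170506@lem1}. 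The interior case $B_R(x_0)\subset\Omega$ goes through exactly as in \cite[Section 4]{arXiv:1604.02690v2}, so my focus is the boundary case $x_0\in\partial\Omega$. By Assumption \ref{170219@ass2} (ii), I take coordinates in which $x_0 = 0$, $x_{01} = 0$, and $\{y_1 > \gamma R\} \cap B_R \subset \Omega_R \subset \{y_1 > -\gamma R\} \cap B_R$.

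\textbf{Construction.} Let $(\bar u,\bar p) \in W_2^1(\Omega_R)^d \times L_2(\Omega_R)$ solve the homogeneous mixed problem
\[
\operatorname{div}\bar u = 0,\quad \cL_0\bar u + \nabla\bar p = 0\ \text{in }\Omega_R,\quad \cB_0\bar u + n\bar p = 0\ \text{on }\partial\Omega\cap B_R,\quad \bar u = u\ \text{on }\partial B_R\cap\Omega,
\]
whose solvability follows from Lemma \ref{170224@lem2} after writing $\bar u = u + \tilde u$ and solving for $\tilde u$ with zero Dirichlet data on $\partial B_R \cap \Omega$. The resulting energy bound reads
$\|D\bar u\|_{L_2(\Omega_R)} + \|\bar p\|_{L_2(\Omega_R)} \le N(\|Du\|_{L_2(\Omega_R)} + \|p\|_{L_2(\Omega_R)} + \|f^\alpha\|_{L_2} + \|g\|_{L_2})$. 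Applying Lemma \ref{160815@lem1} (b) (after a careful comparison to a genuine half-space solution on $B_R^+$ whose flat bottom coincides with $\{y_1 = \gamma R\}$, with Dirichlet data on the curved part matching $\bar u$ and the $O(\gamma R)$ boundary mismatch absorbed as an error) yields
\[
\|D\bar u\|_{L_\infty(\Omega_{R/4}\cap\{y_1 > 2\gamma R\})} + \|\bar p\|_{L_\infty(\Omega_{R/4}\cap\{y_1 > 2\gamma R\})} \le NR^{-d/2}\bigl(\|D\bar u\|_{L_2(\Omega_R)} + \|\bar p\|_{L_2(\Omega_R)}\bigr).
\]
I then set $V := (D\bar u)\chi_{\{y_1 > 2\gamma R\}}$, $p_2 := \bar p\,\chi_{\{y_1 > 2\gamma R\}}$, $W := Du - V$, $p_1 := p - p_2$ on $\Omega_R$. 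Estimate \eqref{170510@eq1c} follows from the displayed $L_\infty$ bound together with Lemma \ref{170506@lem1} applied to the original pair $(u,p)$ in \eqref{170511@eq2}.

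\textbf{Smallness and the main obstacle.} I will partition $\Omega_R$ into $\Omega_R^+ := \Omega_R \cap \{y_1 > 2\gamma R\}$ and the strip $S := \Omega_R \setminus \Omega_R^+$, with $|S| \le N\gamma R^d$. On $\Omega_R^+$, $(W,p_1) = (D(u-\bar u), p - \bar p)$, and $(u-\bar u, p-\bar p)$ satisfies the full $\cL$-Stokes system on $\Omega_R$ with source $D_\alpha(f^\alpha - (A - A_0)^{\alpha\beta}D_\beta\bar u)$, conormal data $(f^\alpha - (A - A_0)^{\alpha\beta}D_\beta\bar u)n_\alpha$ on $\partial\Omega\cap B_R$, divergence $g$, and zero Dirichlet data on $\partial B_R\cap\Omega$; Lemma \ref{170224@lem2} then gives
\[
\|W\|_{L_2(\Omega_R^+)} + \|p_1\|_{L_2(\Omega_R^+)} \le N\bigl(\|(A - A_0)D\bar u\|_{L_2(\Omega_R)} + \|f^\alpha\|_{L_2} + \|g\|_{L_2}\bigr).
\]
By H\"older with exponents $2\nu, 2\mu$, where $1/\mu + 1/\nu = 1$, one has $\|(A - A_0)D\bar u\|_{L_2} \le \|A - A_0\|_{L_{2\nu}(B_R)}\|D\bar u\|_{L_{2\mu}(\Omega_R)}$; Assumption \ref{170219@ass2} (i) combined with $|A - A_0| \le 2\delta^{-1}$ gives $\|A - A_0\|_{L_{2\nu}(B_R)} \le NR^{d/(2\nu)}\gamma^{1/(2\nu)}$, while $\|D\bar u\|_{L_{2\mu}(\Omega_R)}$ is handled by writing $D\bar u = Du - W$ and invoking Lemma \ref{170506@lem1} (with Gehring exponent $q_0 > 2\mu$) to bound $\|Du\|_{L_{2\mu}(\Omega_R)}$ in terms of $\|(|Du|^2 + |p|^2)^{1/2}\|_{L_2(\Omega_{2R})}$ plus data, absorbing the $W$-piece via Young's inequality thanks to the $\gamma^{1/(2\nu)}$ prefactor. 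On the strip $S$, $W = Du$, and H\"older yields $\|W\|_{L_2(S)} \le |S|^{1/(2\nu)}\|Du\|_{L_{2\mu}(\Omega_R)} \le N(\gamma R^d)^{1/(2\nu)}\|Du\|_{L_{2\mu}(\Omega_R)}$, once again producing the $\gamma^{1/(2\nu)}$ factor required for \eqref{170510@eq1b}. The \emph{main obstacle} is the $L_\infty$-bound for $(D\bar u,\bar p)$: Lemma \ref{160815@lem1} (b) requires a genuinely flat boundary, whereas $\partial\Omega \cap B_R$ is only Reifenberg-flat within a $\gamma R$-strip, so one must carefully compare $\bar u$ with a true half-space solution, pick up and absorb the $\gamma$-error, and handle the strip via the zero extension of $V$. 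A minor secondary point is that the Gehring exponent $q_0$ from Lemma \ref{170506@lem1} must be chosen greater than $2\mu$, constraining $\mu$ close to $1$ (consistent with the requirement $2\mu < q$).
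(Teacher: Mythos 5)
Your interior case, the smallness mechanism via partially-BMO oscillation combined with H\"older and Gehring/reverse-H\"older (Lemma~\ref{170506@lem1}), and the idea of using Lemma~\ref{160815@lem1} for the Lipschitz estimate of the good piece all match the paper. However, your boundary-case construction is genuinely different from the paper's and leaves unresolved gaps. The paper's Case~2 does \emph{not} solve an auxiliary Stokes system on the Reifenberg piece $\Omega_R(x_0)$; it reflects $(Du,p,f^\alpha)$ across the hyperplane $\{y_1=\gamma R\}$ and shows $(u,p)$ restricted to the \emph{flat-bottom} domain $\tilde B_R^+ = B_R\cap\{y_1>\gamma R\}$ satisfies a frozen-coefficient system with extra source terms $G^\alpha$ supported on a reflected strip $\Omega^*$ of measure $\lesssim \gamma R^d$. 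The auxiliary pair $(\hat w,\hat p_1)$ is then solved as a \emph{pure} conormal problem on $\tilde B_R^+$ (so Lemma~\ref{170224@lem2} applies as stated), and the remainder $(v,p_2)=(u-\hat w,p-\hat p_1)$ satisfies a homogeneous $\cL_0$-system on $\tilde B_R^+$ with a genuinely flat conormal boundary, so Lemma~\ref{160815@lem1}(b) applies directly after a translation. In your scheme, by contrast, $(\bar u,\bar p)$ solves a \emph{mixed} Dirichlet/conormal problem on $\Omega_R$: Lemma~\ref{170224@lem2} does not give solvability or the $L_2$ estimate for such a problem (its weak formulation tests against all of $W^1_2$ with no vanishing trace on $\partial B_R\cap\Omega$), and the same issue recurs when you apply it to $(u-\bar u,p-\bar p)$, which again has mixed data.

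Two further concrete problems. First, the $L_\infty$ bound for $(D\bar u,\bar p)$ is the crux of \eqref{170510@eq1c}, and you correctly flag that Lemma~\ref{160815@lem1}(b) requires a flat boundary while $\partial\Omega\cap B_R$ is only $\gamma R$-flat; your ``comparison with a true half-space solution, absorbing the $O(\gamma R)$ mismatch'' is exactly the step the reflection trick is designed to bypass, and it is neither carried out nor a known lemma in this setting -- so as written the main estimate \eqref{170510@eq1c} is not proved. Second, your smallness factor arises from $\|(A-A_0)D\bar u\|_{L_2}\le\gamma^{1/(2\nu)}\|D\bar u\|_{L_{2\mu}}$, but the reverse H\"older inequality controls $\|Du\|_{L_{2\mu}}$ for the \emph{original} solution, not $\|D\bar u\|_{L_{2\mu}}$; writing $D\bar u=Du-W$ and ``absorbing $W$ via Young'' does not close because you only obtain an $L_2$ (not $L_{2\mu}$) estimate for $W$, so the argument is circular. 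The paper avoids this by making the auxiliary pair carry the inhomogeneity, so the freezing error is $F^\alpha=(A_0-A)D_\beta u+f^\alpha$ with $Du$ (not the auxiliary gradient) inside, which is exactly what Lemma~\ref{170506@lem1} controls.
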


\begin{remark}
Proposition \ref{170509_prop1} is a version of \cite[Proposition 5.1]{arXiv:1702.07045v1} when the boundary condition is of the Neumann type.
Thus, the proof of Proposition \ref{170509_prop1} is an adaptation of that of \cite[Proposition 5.1]{arXiv:1702.07045v1}.
Since the estimates for the case with $B_R(x_0) \subset \Omega$ (i.e., the interior estimates) are irrelevant to the boundary condition, we may refer to the corresponding case in \cite[Proposition 5.1]{arXiv:1702.07045v1}.
However, we provide here the proof of both the boundary and interior cases showing that the constants $N$ in the estimates \eqref{170510@eq1b} and \eqref{170510@eq1c} depend only on $d$, $\delta$, and $q$.
On the other hand, the interior estimates in \cite[Proposition 5.1]{arXiv:1702.07045v1} have constants $N$ depending also on other parameters such as $R_0$ in Assumption \ref{170219@ass2} and the diameter of $\Omega$.
This is due to the reverse H\"older's inequality in \cite{arXiv:1702.07045v1}, the statement of which is of global nature for solutions with the Dirichlet boundary condition.
\end{remark}

\begin{proof}[Proof of Proposition \ref{170509_prop1}]
Without loss of generality, we assume that $x_0=0$.
Let $\mu,\, \nu$ be constants satisfying $1/\mu+1/\nu=1$ and $2\mu=q_0$, where $q_0\in (2,\infty)$ is a number from Lemma \ref{170506@lem1} that depends only on $d$, $\delta$, and $q$.

{\em{Case 1}}. $B_R\subset \Omega$.
By Assumption \ref{170219@ass2} $(\gamma)$ $(i)$, there exists a coordinate system such that
\begin{equation}		\label{170510@eq1}
\dashint_{B_R}\big|A^{\alpha\beta}(x_1,x')-A_0^{\alpha\beta}(x_1)\big|\,dx\le \gamma,
\end{equation}
where we set
$$
A_0^{\alpha\beta}(x_1)=\dashint_{B_R'}A^{\alpha\beta}(x_1,y')\,dy'.
$$
Let $\cL_0$ be the elliptic operator with the coefficients $A^{\alpha\beta}_0$ and let $\cB_0$ be the conormal derivative operator associated with $\cL_0$.

Note that in the ball $B_R$, the hypothesis of Lemma \ref{170224@lem2} is satisfied.
Hence, there exists a unique $(w,p_1)\in \tilde{W}^1_2(B_R)^d\times L_2(B_R)$ satisfying
$$
\left\{
\begin{aligned}
\operatorname{div} w=g &\quad \text{in }\, B_R,\\
\cL_0 w+\nabla p_1=D_\alpha F^\alpha &\quad \text{in }\, B_R,\\
\cB_0 w+np_1=F^\alpha n_\alpha &\quad \text{on }\, \partial B_R,
\end{aligned}
\right.
$$
where $F^\alpha=(A^{\alpha\beta}_0-A^{\alpha\beta})D_\beta u+f^\alpha$, and
\begin{equation}		\label{170510@eq1a}
\|Dw\|_{L_2(B_R)}+\|p_1\|_{L_2(B_R)}\le N\big(\|F^\alpha\|_{L_2(B_R)}+\|g\|_{L_2(B_R)}\big),
\end{equation}
where $N=N(d,\delta)$.
By H\"older's inequality, the boundedness of $A^{\alpha\beta}$, \eqref{170510@eq1}, and Lemma \ref{170506@lem1}, we have
\begin{align*}
(|F^\alpha|^2)^{\frac{1}{2}}_{B_R}&\le \big(|A^{\alpha\beta}_0-A^{\alpha\beta}|^{2\nu}\big)^{\frac{1}{2\nu}}_{B_R}(|Du|^{2\mu})^{\frac{1}{2\mu}}_{B_R}+(|f^\alpha|^2)^{\frac{1}{2}}_{B_R}\\
&\le N\gamma^{\frac{1}{2\nu}}(|Du|^2+|p|^2)^{\frac{1}{2}}_{B_{2R}}
+N(|f^\alpha|^{2\mu}+|g|^{2\mu})^{\frac{1}{2\mu}}_{B_{2R}},
\end{align*}
where $N=N(d,\delta,q)$.
Using this together with \eqref{170510@eq1a}, we obtain \eqref{170510@eq1b} with $W=Dw$.
To show \eqref{170510@eq1c}, we observe that $(v,p_2):=(u,p)-(w,p_1)$ satisfies
$$
\left\{
\begin{aligned}
\operatorname{div} v=0 &\quad \text{in }\, B_R,\\
\cL_0 v+\nabla p_2=0 &\quad \text{in }\, B_R.
\end{aligned}
\right.
$$
By Lemma \ref{160815@lem1} $(a)$ with scaling, we get
$$
\|Dv\|_{L_\infty(B_{R/2})}+\|p_2\|_{L_\infty(B_{R/2})}\le N(|Dv|^2+|p_2|^2)^{\frac{1}{2}}_{B_{2R}},
$$
where $N=N(d,\delta)$.
From this, the fact that $(v,p_2)=(u-w,p-p_1)$, and \eqref{170510@eq1b}, we conclude \eqref{170510@eq1c} with $V=Dv$.

{\em{Case 2}}. $0\in \partial \Omega$.
By Assumption \ref{170219@ass2} $(\gamma)$ $(ii)$, there is a coordinate system such that \eqref{170510@eq1} and
$$
\{y:\gamma R<y_1\}\cap B_R\subset \Omega_R\subset \{y:-\gamma R<y_1\}\cap B_R.
$$
Define $\cL_0$ and $\cB_0$ as in {\em{Case 1}}, and set
$$
\tilde{B}_{R}^+=B_{R}\cap \{y:\gamma R<y_1\}.
$$
We note that $(u,p)$ satisfies
\begin{equation}		\label{170511@eq1}
\left\{
\begin{aligned}
\operatorname{div} u=g
&\quad \text{in }\, \tilde{B}_R^+,\\
\cL_0 u+\nabla p=D_\alpha (F^\alpha+G^\alpha)
&\quad \text{in }\, \tilde{B}_R^+,\\
\cB_0 u+np=(F^\alpha+G^\alpha) n_\alpha
&\quad \text{on }\, B_R\cap \{y:\gamma R=y_1\},
\end{aligned}
\right.
\end{equation}
where we set
$$
F^\alpha=(A_0^{\alpha\beta}-A^{\alpha\beta})D_\beta u+f^\alpha,
$$
\begin{align*}
G^1(y_1,y')&=A^{1\beta}(2\gamma R-y_1,y')(D_{\beta}u)(2\gamma R-y_1,y')\chi_{\Omega^*}\\
&\quad -f^1(2\gamma R-y_1,y')\chi_{\Omega^*}+e_1 p(2\gamma R-y_1,y')\chi_{\Omega^*},
\end{align*}
and for $\alpha\in \{2,\ldots,d\}$,
\begin{align*}
G^\alpha(y_1,y')&=-A^{\alpha\beta}(2\gamma R-y_1,y')(D_{\beta}u)(2\gamma R-y_1,y')\chi_{ \Omega^*}\\
&\quad +f^\alpha(2\gamma R-y_1,y')\chi_{\Omega^*}-e_\alpha p(2\gamma R-y_1,y')\chi_{\Omega^*}.
\end{align*}
Here, $e_\alpha$ is the $\alpha$-th unit vector in $\bR^d$ and
$$
\Omega^*=\{(y_1,y'): (2\gamma R-y_1,y')\in \Omega_R\setminus \tilde{B}^+_R\}\cap \tilde{B}^+_R.
$$
Indeed, one can check that \eqref{170511@eq1} holds as follows.
Let $\phi\in W^1_2(\tilde{B}_R^+)$ which vanishes on $\partial B_R\cap \{y:\gamma R<y_1\}$.
We extend $\phi$ to $\{y:\gamma R<y_1\}$ by setting $\phi\equiv 0$ on $\{y:\gamma R<y_1\}\setminus B_R$.
Then set
$$
\tilde{\phi}=
\left\{
\begin{aligned}
\phi(y_1,y') &\quad \text{if }\, y_1>\gamma R,\\
\phi(2\gamma R-y_1,y')&\quad \text{otherwise}.
\end{aligned}
\right.
$$
It is easily seen that $\tilde{\phi}\in W^1_2(\Omega_R)$ and vanishes on $\Omega\cap \partial B_R$.
Since $(u,p)$ satisfies \eqref{170511@eq2}, we have
$$
\int_{\Omega_R}A^{\alpha\beta}D_\beta u\cdot D_\alpha \tilde{\phi}\,dx
+\int_{\Omega_R}p \operatorname{div}\tilde{\phi}\,dx
=\int_{\Omega_R}f^\alpha\cdot D_\alpha \tilde{\phi}\,dx.
$$
From this identity and the definition of $\tilde{\phi}$, it follows that
\begin{align*}
&\int_{\tilde{B}_R^+}A^{\alpha\beta}_0D_\beta u\cdot D_\alpha \phi\,dx+\int_{\tilde{B}_R^+}p \operatorname{div}\phi\,dx\\
&=\int_{\tilde{B}_R^+}(A^{\alpha\beta}_0-A^{\alpha\beta})D_\beta u\cdot D_\alpha \phi\,dx-\int_{\Omega_R\setminus \tilde{B}_R^+}A^{\alpha\beta}D_\beta u\cdot D_\alpha \tilde{\phi}\,dx\\
&\quad -\int_{\Omega_R\setminus \tilde{B}_R^+}p \operatorname{div}\tilde{\phi}\,dx+\int_{\tilde{B}_R^+}f^\alpha \cdot D_\alpha \phi\,dx+\int_{\Omega_R\setminus \tilde{B}_R^+}f^\alpha\cdot D_\alpha \tilde{\phi}\,dx\\
&=\int_{\tilde{B}_R^+}(F^\alpha+G^\alpha)\cdot D_\alpha \phi\,dx,
\end{align*}
which implies that \eqref{170511@eq1} holds.

Since the hypothesis of Lemma \ref{170224@lem2} holds on $\tilde{B}_{R}^+$, there exists a unique $(\hat{w},\hat{p}_1)\in \tilde{W}^1_2(\tilde{B}_{R}^+)^d\times L_2(\tilde{B}_{R}^+)$ satisfying
$$
\left\{
\begin{aligned}
\operatorname{div} \hat{w}=g &\quad \text{in }\, \tilde{B}_{R}^+,\\
\cL_0 \hat{w}+\nabla \hat{p}_1=D_\alpha (F^\alpha+G^\alpha) &\quad \text{in }\, \tilde{B}_{R}^+,\\
\cB_0 \hat{w}+n\hat{p}_1=(F^\alpha+G^\alpha) n_\alpha &\quad \text{on }\, \partial \tilde{B}_{R}^+
\end{aligned}
\right.
$$
and
$$
\|D\hat{w}\|_{L_2(\tilde{B}^+_R)}+\|\hat{p}_1\|_{L_2(\tilde{B}^+_R)}\le N\big(\|F^\alpha+G^\alpha\|_{L_2(\tilde{B}^+_R)}+\|g\|_{L_2(\tilde{B}_R)}\big),
$$
where $N=N(d,\delta)$.
Note that
$$
\|G^\alpha\|_{L_2(\tilde{B}^+_R)}\le N\||Du|+|p|\|_{L_2(\Omega_R\setminus \tilde{B}^+_R)}
+N\|f^\alpha\|_{L_2(\Omega_R\setminus \tilde{B}^+_R)}.
$$
By H\"older's inequality, Lemma \ref{170506@lem1}, and the fact that
$$
|\Omega_R\setminus \tilde{B}^+_R|\le N\gamma R^d, \quad |\Omega_{2R}|\ge NR^d,
$$
we have
$$
(|G^\alpha|^2)^{\frac{1}{2}}_{\tilde{B}^+_R}\le N\gamma^{\frac{1}{2\nu}}(|Du|^2+|p|^2)^{\frac{1}{2}}_{\Omega_{2R}}+N(|f^\alpha|^{2\mu}+|g|^{2\mu})^{\frac{1}{2\mu}}_{\Omega_{2R}},
$$
where $N=N(d,\delta,q)$.
Using this and following the same arguments in the proof of \eqref{170510@eq1b} in {\em{Case 1}}, one can easily show that
$$
(|D\hat{w}|^2+|\hat{p}_1|^2)^{\frac{1}{2}}_{\tilde{B}^+_R}\le N\gamma^{\frac{1}{2\nu}}(|Du|^2+|p|^2)^{\frac{1}{2}}_{\Omega_{2R}}+N(|f^\alpha|^{2\mu}+|g|^{2\mu})^{\frac{1}{2\mu}}_{\Omega_{2R}}.
$$
Hence, defining $(W,p_1)$ in $\Omega_R$ by
$$
(W,p_1)=(D\hat{w},\hat{p}_1)\chi_{\tilde{B}^+_R}+(Du, p)\chi_{\Omega_R\setminus \tilde{B}^+_R},
$$
 we see that \eqref{170510@eq1b} holds.
To prove \eqref{170510@eq1c}, set
$$
(v,p_2)=(u-\hat{w},p-\hat{p}_1)\chi_{\tilde{B}_R^+} \quad \text{and}\quad V=Dv\chi_{\tilde{B}^+_R}.
$$
Observe that $(v,p_2)$ satisfies
$$
\left\{
\begin{aligned}
\operatorname{div} v=0 &\quad \text{in }\, \tilde{B}_{R}^+,\\
\cL_0 v+\nabla p_2=0 &\quad \text{in }\, \tilde{B}_{R}^+,\\
\cB_0 v+n p_2=0 &\quad \text{on }\, \partial \tilde{B}_{R}^+.
\end{aligned}
\right.
$$
We write $y_0=(\gamma R,0,\ldots,0)\in \bR^d$.
We then have
\begin{align*}
\Omega_{R/4}&\subset B_{R/4}(y_0)\cap \{y_1>\gamma R\}\\
&\subset B_{R/2}(y_0)\cap \{y_1>\gamma R\}\subset \Omega_{R}.
\end{align*}
Therefore, by Lemma \ref{160815@lem1} $(b)$ with scaling, we get
$$
\|Dv\|_{L_\infty(\Omega_{R/4})}+\|p_2\|_{L_\infty(\Omega_{R/4})}\le (|V|^2)^{\frac{1}{2}}_{\Omega_R},
$$
which together with \eqref{170510@eq1b} gives \eqref{170510@eq1c}.
The proposition is proved.
\end{proof}

We denote the maximal function of $f$ defined on $\bR^d$ by
$$
\cM f(x)=\sup_{\substack{y\in \bR^d, \,r>0\\ x\in B_r(y)}}\dashint_{B_r(y)} |f(z)|\,dz.
$$
Throughout this paper, by $\cM (f)$ we mean $\cM(f I_\Omega)$ if $f$ is defined on $\Omega\subset \bR^d$.
For $q\in (2,\infty)$, we denote
\begin{align*}
\cE_1(s)&=\{x\in \Omega:(\cM(|Du|^2+|p|^2)(x))^{1/2}>s\},\\
\cE_2(s)&=\big\{x\in \Omega:(\cM(|Du|^2+|p|^2)(x))^{1/2}\\
&\quad+\gamma^{-1/(2\nu)}(\cM(|f^\alpha|^{2\mu}+|g|^{2\mu})(x))^{1/(2\mu)} >s\big\},
\end{align*}
where $\mu,\nu\in (1,\infty)$ are from Proposition \ref{170509_prop1} satisfying $2\mu<q$ and $1/\mu+1/\nu=1$.

The proof of Theorem \ref{MT1} relies on the following estimate of level sets.

\begin{lemma}		\label{170512@lem5}
Suppose that Assumption \ref{170219@ass2} $(\gamma)$ holds with $\gamma\in (0,1/48]$.
Let $q\in (2,\infty)$ and $(u,p)\in W^1_2(\Omega)^d\times L_2(\Omega)$ satisfy \eqref{170511@eq2} with $f^\alpha\in L_q(\Omega)^d$ and $g\in L_q(\Omega)$.
Then there exists a constant $\kappa=\kappa(d,\delta,q)>1$ such that the following holds: for $x_0\in \overline{\Omega}$ and $R\in (0,R_0]$, and $s>0$, if
\begin{equation}		\label{170512@eq2}
\gamma^{\frac{1}{\nu}}\le \frac{|\Omega_{R/64}(x_0)\cap \cE_1(\kappa s)|}{|\Omega_{R/64}(x_0)|},
\end{equation}
then we have
$$
\Omega_{R/64}(x_0)\subset \cE_2(s).
$$
\end{lemma}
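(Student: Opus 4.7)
The plan is to argue by contraposition: suppose there exists $\hat x\in\Omega_{R/64}(x_0)\setminus\cE_2(s)$, so that
$$
\bigl(\cM(|Du|^2+|p|^2)(\hat x)\bigr)^{1/2}\le s,\qquad \bigl(\cM(|f^\alpha|^{2\mu}+|g|^{2\mu})(\hat x)\bigr)^{1/(2\mu)}\le \gamma^{\frac{1}{2\nu}}s,
$$
and I will show that the density inequality \eqref{170512@eq2} must fail once $\kappa=\kappa(d,\delta,q)$ is chosen sufficiently large.

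First I would pick a center $y_0$ and a radius $\rho$ comparable to $R$ in order to apply Proposition \ref{170509_prop1}. When $B_\rho(x_0)\subset\Omega$ for a suitable $\rho$ of order $R$ (say $\rho=R/16$) I set $y_0=x_0$ and use the interior case; otherwise I pick $y_0\in\partial\Omega$ with $|x_0-y_0|=\operatorname{dist}(x_0,\partial\Omega)$, which is then of order $R$, and use the boundary case with the same $\rho$. The numerical constants ($R/16$, $R/64$) can be arranged so that $\Omega_{R/64}(x_0)\subset\Omega_{\rho/4}(y_0)$, $\Omega_{2\rho}(y_0)\subset B_{cR}(\hat x)$ for some absolute $c$, and $|\Omega_\rho(y_0)|\asymp|\Omega_{R/64}(x_0)|\asymp R^d$; the last of these uses the Reifenberg flatness $\gamma\le 1/48$ to keep the measure of $\Omega_r(\cdot)$ comparable to $r^d$. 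Proposition \ref{170509_prop1} then yields a decomposition $(Du,p)=(W,p_1)+(V,p_2)$ on $\Omega_\rho(y_0)$; the inclusions above let me bound the averages in \eqref{170510@eq1b}--\eqref{170510@eq1c} by $\cM(\cdots)(\hat x)$, giving
$$
(|W|^2+|p_1|^2)^{1/2}_{\Omega_\rho(y_0)}\le N\gamma^{\frac{1}{2\nu}}s,\qquad \|V\|_{L_\infty(\Omega_{\rho/4}(y_0))}+\|p_2\|_{L_\infty(\Omega_{\rho/4}(y_0))}\le Ns.
$$

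Next I would upgrade this into a pointwise bound for $\cM(|Du|^2+|p|^2)$ throughout $\Omega_{R/64}(x_0)$. For $x\in\Omega_{R/64}(x_0)$ and a test ball $B_r(x)$, split according to whether $r$ is small (so that $B_r(x)\cap\Omega\subset\Omega_{\rho/4}(y_0)$, where I may use the decomposition together with the $L_\infty$ control of $(V,p_2)$) or large (so that $B_r(x)\subset B_{2r}(\hat x)$, and the average is dominated by $\cM(|Du|^2+|p|^2)(\hat x)\le s^2$). Combining the two regimes gives
$$
\cM(|Du|^2+|p|^2)(x)\le N_0\,\cM\bigl(I_{\Omega_\rho(y_0)}(|W|^2+|p_1|^2)\bigr)(x)+N_1 s^2
$$
for constants $N_0,N_1$ depending only on $d,\delta,q$.

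Finally, choose $\kappa^2>2N_1$; then on $\Omega_{R/64}(x_0)\cap\cE_1(\kappa s)$ the first term on the right must exceed $\kappa^2s^2/(2N_0)$, and the weak-$(1,1)$ bound for the Hardy--Littlewood maximal function, combined with the $L^1$ estimate on $|W|^2+|p_1|^2$ just derived, yields
$$
|\Omega_{R/64}(x_0)\cap\cE_1(\kappa s)|\le \frac{N_2\gamma^{1/\nu}}{\kappa^2}\,|\Omega_\rho(y_0)|\le \frac{N_3\gamma^{1/\nu}}{\kappa^2}\,|\Omega_{R/64}(x_0)|.
$$
Enlarging $\kappa$ further so that $N_3/\kappa^2<1$ contradicts \eqref{170512@eq2}. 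The main obstacle is the bookkeeping in selecting $y_0$ and $\rho$: one must ensure simultaneously that $\rho\le R_0/2$ (so that Proposition \ref{170509_prop1} applies), that all the geometric inclusions above hold uniformly in both the interior and boundary cases, and that the volume comparison $|\Omega_\rho(y_0)|\asymp R^d$ is uniform in $y_0$ --- which is precisely where the flatness hypothesis $\gamma\le 1/48$ is used in order to keep every constant in the final inequality dependent only on $d,\delta,q$.
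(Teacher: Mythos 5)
Your proposal is correct and follows essentially the same strategy as the paper: argue by contradiction using the existence of a point in $\Omega_{R/64}(x_0)\setminus\cE_2(s)$, invoke Proposition \ref{170509_prop1} to decompose $(Du,p)=(W,p_1)+(V,p_2)$ on a ball of comparable size (splitting into an interior case and a boundary case based on $\operatorname{dist}(x_0,\partial\Omega)$), and then derive a smallness bound on $|\Omega_{R/64}(x_0)\cap\cE_1(\kappa s)|$ for large $\kappa$. The one place where you are more explicit than the paper is the passage from the level set of $\cM(|Du|^2+|p|^2)$ to a level set involving $(W,p_1)$: you spell out the ``small ball vs.~large ball'' dichotomy for the maximal function and invoke the weak-$(1,1)$ inequality for $\cM$, whereas the paper's writeup compresses this step into a single ``Chebyshev'' line; both yield the same estimate, but your version is the more careful rendering of the argument.
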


\begin{proof}
By scaling and translating the coordinates, we may assume that $s=1$ and $x_0=0$.
We prove by contradiction.
Suppose that
$$
(\cM(|Du|^2+|p|^2)(x))^{1/2}+\gamma^{-1/(2\nu)}
(\cM(|f^\alpha|^{2\mu}+|g|^{2\mu})(x))^{1/(2\mu)}\le 1
$$
for some $x\in \Omega_{R/64}$.
If $\operatorname{dist}(0,\partial \Omega)\ge R/16$, we have
$$
x\in B_{R/64}\subset B_{R/16}\subset \Omega.
$$
By Proposition \ref{170509_prop1}, $(Du,p)$ admits a decomposition
$$
(Du,p)=(W,p_1)+(V,p_2) \quad \text{in }\, B_{R/16}
$$
with the estimates
$$
(|W|^2+|p_1|^2)^{\frac{1}{2}}_{B_{R/16}}\le N_1\gamma^{\frac{1}{2\nu}}, \quad \|V\|_{L_\infty(B_{R/64})}+\|p_2\|_{L_\infty(B_{R/64})}\le N_1,
$$
where $N_1=N_1(d,\delta,q)$.
From this together with Chebyshev's inequality, it follows that
\begin{align*}
|B_{R/64}\cap \cE_1(\kappa)|&\le \big|\{ y\in B_{R/64}:|W(y)|+|p_1(y)|>\kappa -N_1\}\big|\\
&\le \int_{B_{R/64}}\left|\frac{|W|+|p|}{\kappa-N_1}\right|^2\,dy\le \frac{N_1^2\gamma^{\frac{1}{\nu}}}{|\kappa -N_1|^2}|B_{R/8}|,
\end{align*}
which contradicts \eqref{170512@eq2} if we choose a sufficiently large $\kappa$.

On the other hand, if $\operatorname{dist}(0,\partial \Omega)<R/16$, we take $z_0\in \partial \Omega$ such that $\operatorname{dist}(0,\partial \Omega)=|z_0|$.
Note that
$$
x\in \Omega_{R/64}\subset \Omega_{R/8}(z_0).
$$
By Proposition \ref{170509_prop1},
$(Du,p)$ admits a decomposition
$$
(Du,p)=(W,p_1)+(V,p_2) \quad \text{in }\, \Omega_{R/2}(z_0)
$$
with the estimates
$$
(|W|^2+|p_1|^2)^{\frac{1}{2}}_{\Omega_{R/2}}\le N_2\gamma^{\frac{1}{2\nu}}, \quad \|V\|_{L_\infty(\Omega_{R/8})}+\|p_2\|_{L_\infty(\Omega_{R/8})}\le N_2,
$$
where $N_2=N_2(d,\delta,q)$.
We then obtain that
$$
|\Omega_{R/16}\cap \cE_1(\kappa)|\le \frac{N_2^2\gamma^{\frac{1}{\nu}}}{|\kappa-N_2|^2}|B_R|,
$$
which contradicts \eqref{170512@eq2} if we choose a sufficiently large $\kappa$.
\end{proof}

\begin{proof}[Proof of Theorem \ref{MT1}]
We may assume that $f\equiv 0$.
Indeed, by the result in \cite{MR2263708} and the fact that $\Omega$ is a John domain (see, for instance, \cite[Remark 3.3]{ arXiv:1702.07045v1}), there exists $\phi^i\in \mathring{W}^1_{q_1}(\Omega)^d$ satisfying
$$
\operatorname{div} \phi^i=f_i \quad \text{in }\Omega, \quad \|D\phi^i\|_{L_{q_1}(\Omega)}\le N\|f_i\|_{L_{q_1}(\Omega)},
$$
where $N=N(d,q_1, R_0, K)$.
If we set $\Phi^\alpha=(\phi^1_\alpha,\ldots,\phi^d_\alpha)$, then by the Poincar\'e inequality we have
$$
\sum_{\alpha=1}^d D_\alpha \Phi^\alpha=f \quad \text{and}\quad \|\Phi^\alpha\|_{L_{q}(\Omega)}
\le N\|D\Phi^\alpha\|_{L_{q_1}(\Omega)}\le N\|f\|_{L_{q_1}(\Omega)}.
$$
In addition, because of Lemma \ref{170224@lem2}, we only need to consider the case when $q\neq 2$.

{\em{Case 1}}. $q>2$.
Let $\gamma\in (0,1/48]$ be a constant to be chosen below.
By Assumption \ref{170219@ass2} $(\gamma)$, $\Omega$ satisfies the hypothesis of Lemma \ref{170224@lem2}.
Hence, there exists a unique $(u,p)\in \tilde{W}^1_2(\Omega)^d\times L_2(\Omega)$ satisfying \eqref{170512@eq4}.
We prove that this $(u,p)$ is indeed in $W^1_q(\Omega)^d\times L_q(\Omega)$ and satisfies \eqref{170512@eq4a}.

Let $\kappa=\kappa(d,\delta,q)>1$ be the constant in Lemma \ref{170512@lem5}.
By the Hardy-Littlewood inequality, we have
\begin{equation}	\label{170512@eq8a}	
|\cE_1(\kappa s)| \le \frac{N_0}{(\kappa s)^{2}}\big\||Du|^2+|p|^2\big\|_{L_{1}(\Omega)}\le \frac{N_0}{(\kappa s)^{2}}\big\||Du|+|p|\big\|_{L_{2}(\Omega)}^{2},
\end{equation}
where $N_0=N_0(d,\delta)$.
Using this, Lemma \ref{170512@lem5}, and a result from measure theory on the ``crawling of ink spots,'' which can be found in \cite{MR579490} or \cite[Section 2]{MR563790}, we have
\begin{equation}		\label{170512@eq8}
|\cE_1(\kappa s)|\le N(d)\gamma^{1/\nu}|\cE_2(s)|
\end{equation}
for any $s>s_0$, where
$$
s_0^{2}= \frac{N_0}{\kappa^{2} \gamma^{1/\nu}|B_{R_0}|}\big\||Du|+|p|\big\|^{2}_{L_{2}(\Omega)}.
$$
For any sufficiently large $S>0$, it follows from \eqref{170512@eq8a} and \eqref{170512@eq8} that
\begin{align}
\nonumber
&\int_0^{\kappa S}|\cE_1(s)|s^{q-1}\,ds=\kappa^{q}\int^{S}_0|\cE_1(\kappa s)|s^{q-1}\,ds\\
\nonumber
&\le \kappa^q\int_0^{s_0}|\cE_1(\kappa s)|s^{q-1}\,ds+N_2\gamma^{1/\nu}\int_0^{S}|\cE_2(s)|s^{q-1}\,ds\\
\label{170512@eq9}
&\le N_1\gamma^{(2-q)/(2\nu)}\big\||Du|+|p|\big\|_{L_{2}(\Omega)}^q+N_2\gamma^{1/\nu}\int_0^{S}|\cE_2(s)|s^{q-1}\,ds,
\end{align}
where $N_1=N_1(d,\delta,q,R_0)$ and $N_2=N_2(d,\delta,q)$.
Using the definitions of $\cE_1$ and $\cE_2$, and the Hardy-Littlewood maximal function theorem, it holds that (use $q>2\mu$ and $\kappa>1$)
\begin{align}
\nonumber
&\gamma^{1/\nu}\int_0^{S}|\cE_2(s)|s^{q-1}\,ds\\
\nonumber
&\le N \gamma^{1/\nu}\int_0^{S}|\cE_1(s)|s^{q-1}\,ds+N\gamma^{(2-q) /(2\nu)}\||f^\alpha|+|g|\|_{L_q(\Omega)}^q\\
\label{170613@eq8}
&\le N \gamma^{1/\nu}\int_0^{\kappa S}|\cE_1(s)|s^{q-1}\,ds+N\gamma^{(2-q) /(2\nu)}\||f^\alpha|+|g|\|_{L_q(\Omega)}^q.
\end{align}
Notice from $\operatorname{diam}\Omega\le K<\infty$ that
$$
\int_0^{\kappa S}|\cE_1(s)|s^{q-1}\,dx<\infty.
$$
Then, by taking $\gamma=\gamma(d,\delta,q)$ sufficiently small in \eqref{170613@eq8}, we obtain from \eqref{170512@eq9} that
$$
\int_0^{\kappa S}|\cE_1(s)|s^{q-1}\,ds\le N\left(\|Du\|_{L_{2}(\Omega)}^q+\|p\|_{L_{2}(\Omega)}^q+\|f^\alpha\|_{L_q(\Omega)}^q+\|g\|_{L_q(\Omega)}^q\right),
$$
where $N=N(d,\delta,q,R_0)$.
Now, let $S\to \infty$, and use Lemma \ref{170224@lem2} and H\"older's inequality to obtain
$$
\|\cM(|Du|^2+|p|^2)\|_{L_{q/2}(\Omega)}^{q/2}\le N\left(\|f^\alpha\|_{L_q(\Omega)}^q+\|g\|_{L_q(\Omega)}^q\right),
$$
where $N=N(d,\delta,q,R_0,K)$.
From this, we finally see that $(u,p)$ is in $W^1_q(\Omega)^d\times L_q(\Omega)$ and satisfies \eqref{170512@eq4a} because by the Lebesgue differentiation theorem
$$
|Du|^2+|p|^2\le \cM(|Du|^2+|p|^2)
$$
for a.e. $x\in \Omega$.

{\em{Case 2}}. $q\in (1,2)$.
We first prove the a priori estimate \eqref{170512@eq4a} by using a duality argument.
Suppose that $(u,p)\in \tilde{W}^1_q(\Omega)^d\times L_q(\Omega)$ satisfies \eqref{170512@eq4} with $f\equiv 0$.
Let $q'=q/(q-1)>2$ and $\gamma=\gamma(d,\delta,q')$ from {\em{Case 1}}.
Then, for any $F^\alpha\in L_{q'}(\Omega)^d$ and $G\in L_{q'}(\Omega)$, there exists a unique $(v,\pi)\in \tilde{W}^1_{q'}(\Omega)^d\times L_{q'}(\Omega)$ satisfying
\begin{equation}		\label{170513_eq5}
\left\{
\begin{aligned}
\operatorname{div} v=G \quad &\text{in }\ \Omega,\\
\cL^* v+\nabla \pi=D_\alpha  F^\alpha \quad &\text{in }\ \Omega,\\
\cB^* v+n \pi = F^\alpha n_\alpha \quad &\text{on }\ \partial \Omega,
\end{aligned}
\right.
\end{equation}
where $\cL^*$ is the adjoint operator of $\cL$, i.e.,
$$
\cL^* v=D_\alpha\big((A^{\beta\alpha})^TD_\beta v\big), \quad (A^{\beta\alpha})^T_{ij}=A^{\beta\alpha}_{ji},
$$
and $\cB^* v$ is the conormal derivative of $v$ associated with $\cL^*$.
We also have
\begin{equation}		\label{170513@eq1}
\|Dv\|_{L_{q'}(\Omega)}+\|\pi\|_{L_{q'}(\Omega)}\le N\big(\|F^\alpha\|_{L_{q'}(\Omega)}+\|G\|_{L_{q'}(\Omega)}\big),
\end{equation}
where $N=N(d,\delta,q,R_0,K)$.
We test \eqref{170513_eq5} by $u$ to obtain
$$
\int_\Omega A^{\alpha\beta}D_\beta u\cdot D_\alpha v\,dx+\int_\Omega \pi \operatorname{div}u\,dx=\int_\Omega F^\alpha\cdot D_\alpha u\,dx.
$$
Since $(u,p)$ satisfies \eqref{170512@eq4}, it follows from the above identity that
$$
\int_\Omega F^\alpha \cdot D_\alpha u\,dx+\int_\Omega p G\,dx=\int_\Omega f^\alpha \cdot D_\alpha v\,dx+\int_\Omega \pi g\,dx.
$$
Using this and \eqref{170513@eq1}, we get
\begin{align*}
&\left|\int_\Omega F^\alpha \cdot D_\alpha u\,dx+\int_\Omega p G\,dx\right|\\
&\le N\big(\|F^\alpha\|_{L_{q'}(\Omega)}+\|G\|_{L_{q'}(\Omega)}\big)\big(\|f^\alpha\|_{L_{q}(\Omega)}+\|g\|_{L_{q}(\Omega)}\big)
\end{align*}
for any $F^\alpha\in L_{q'}(\Omega)^d$ and $G\in L_{q'}(\Omega)$.
Thus we have  the estimate \eqref{170512@eq4a}.

Next, we prove the solvability.
For $k>0$, we set
$$
f^{\alpha,k}=\max\{-k, \min\{f^\alpha,k\}\}, \quad g^k=\max\{-k, \min\{g,k\}\}.
$$
Since $f^{\alpha,k}$ and $g^k$ are bounded, by Lemma \ref{170224@lem2}, there exists a unique  $(u^k,p^k)\in \tilde{W}^1_2(\Omega)^d\times L_2(\Omega)$ satisfying
\begin{equation}		\label{170513_eq6}
\left\{
\begin{aligned}
\operatorname{div} u^k=g^k \quad &\text{in }\ \Omega,\\
\cL u^k+\nabla p^k=D_\alpha  f^{\alpha,k} \quad &\text{in }\ \Omega,\\
\cB u^k+np^k= f^{\alpha,k} n_\alpha \quad &\text{on }\ \partial \Omega.
\end{aligned}
\right.
\end{equation}
Note that $(u^k, p^k)\in \tilde{W}^1_q(\Omega)^d\times L_q(\Omega)$.
Hence, by the a priori estimate, we have
\begin{align*}
\|Du^k\|_{L_q(\Omega)}+\|p^k\|_{L_q(\Omega)}&\le N\big(\|f^{\alpha,k}\|_{L_q(\Omega)}+\|g^k\|_{L_q(\Omega)}\big)\\
&\le N\big(\|f^{\alpha}\|_{L_q(\Omega)}+\|g\|_{L_q(\Omega)}\big),
\end{align*}
where $N=N(d,\delta,q,R_0,K)$.
From the weak compactness, there exist a subsequence $(u^{k_j}, p^{k_j})$ and $(u, p)\in \tilde{W}^1_q(\Omega)\times L_q(\Omega)$ such that
$u^{k_j} \rightharpoonup u$ in $ W^1_q(\Omega)$ and $p^{k_j}\rightharpoonup p$ in  $L_q(\Omega)$.
From \eqref{170513_eq6}, it is routine to check that $(u,p)$ satisfies \eqref{170512@eq4} with $f\equiv 0$.
Finally, the uniqueness is a simple consequence of \eqref{170512@eq4a}.
The theorem is proved.
\end{proof}

\section{Appendix: Poincar\'e inequality}		\label{S_A}

In this section, we provide a detailed proof of a local version of the Poincar\'e inequality on a Reifenberg flat domain.
Throughout the appendix, we denote the line segment connecting $x$ and $y$ by $\overline{xy}$.

\begin{lemma}		\label{170425@lem2}
Let $\Omega$ be a Reifenberg flat domain satisfying Assumption \ref{170226@ass1} with $\gamma\in [0,1/48]$.
Let $x_0\in \partial \Omega$ and $R\in (0,R_0]$.
Then there exists a sequence $\{x^k\}_{k=0}^\infty$ such that $x^k\in \partial B_{R/2^{k+1}}(x_0)\cap \Omega$,
$$
\overline{x^k\,  x^{k+1}}\subset \overline{B_{R/2^{k+1}}(x_0)\setminus B_{R/2^{k+2}}(x_0)}\cap \Omega, \quad \ell(\overline{x^k\,  x^{k+1}}) \le \frac{R}{2^k},
$$
\begin{equation}
							\label{eq0428_01}
\operatorname{dist}(z,\partial \Omega)> 4 \gamma \frac{R}{2^k}
\end{equation}
for $z\in \overline{x^k\,x^{k+1}}$.
Moreover, for any $x \in \Omega$ satisfying
\begin{equation}
							\label{eq0428_02}
\operatorname{dist}(x,\partial\Omega) = |x-x_0| \le \frac{R}{4},
\end{equation}
there exists $k_0\in \{0,1,2,\ldots\}$ such that $x\in \overline{B_{R/2^{k_0+2}}(x_0)}\cap \Omega$,
$$
\overline{x x^{k_0}} \subset \overline{B_{R/2^{k_0+1}}(x_0)}\cap \Omega,
\quad \ell(\overline{x\, x^{k_0}}) \le \frac{R}{2^{k_0}},
$$
and \eqref{eq0428_01} is satisfied for all $z \in \overline{x x^{k_0}}$ with $k_0$ in place of $k$.
Here, $\ell(\overline{x\, x^{k_0}})$ is the length of $\overline{x\, x^{k_0}}$.
\end{lemma}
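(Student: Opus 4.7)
The plan is to place each $x^k$ on the ``upward'' axis of the Reifenberg coordinate system at scale $s_k:=R/2^k$, and then to verify all four assertions using a slow-rotation estimate for the axes between consecutive dyadic scales. Let $r_k:=s_k/2=R/2^{k+1}$. For each $k\ge 0$, apply Assumption \ref{170226@ass1} at $(x_0,s_k)$ to extract a unit vector $e_1^{(k)}$ satisfying
\[
\{(y-x_0)\cdot e_1^{(k)}>\gamma s_k\}\cap B_{s_k}(x_0)\subset \Omega\cap B_{s_k}(x_0)\subset \{(y-x_0)\cdot e_1^{(k)}>-\gamma s_k\}\cap B_{s_k}(x_0),
\]
and define $x^k:=x_0+r_k e_1^{(k)}$. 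Since $r_k=s_k/2>\gamma s_k$ when $\gamma\le 1/48$, the upper inclusion yields $x^k\in \Omega\cap \partial B_{r_k}(x_0)$.

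The crucial intermediate step is the slow-rotation bound $|e_1^{(k)}-e_1^{(k+1)}|\le 3\gamma$. Any $y\in B_{s_{k+1}}(x_0)$ with $(y-x_0)\cdot e_1^{(k+1)}>\gamma s_{k+1}$ lies in $\Omega$ by the upper inclusion at scale $s_{k+1}$, and as $B_{s_{k+1}}\subset B_{s_k}$ the lower inclusion at scale $s_k$ then forces $(y-x_0)\cdot e_1^{(k)}>-2\gamma s_{k+1}$. Writing $y=x_0+c e_1^{(k+1)}+bv$ with $v\perp e_1^{(k+1)}$ and $|v|=1$, setting $\alpha:=e_1^{(k)}\cdot e_1^{(k+1)}$, and choosing $v$ so that $v\cdot e_1^{(k)}=-\sqrt{1-\alpha^2}$, $c\downarrow \gamma s_{k+1}$, $b\uparrow \sqrt{s_{k+1}^2-c^2}$, the two inclusions combine to give $\gamma\alpha-\sqrt{1-\gamma^2}\sqrt{1-\alpha^2}\ge -2\gamma$; squaring reduces this to $\alpha\ge 1-9\gamma^2/2$, so $|e_1^{(k)}-e_1^{(k+1)}|^2=2(1-\alpha)\le 9\gamma^2$.

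With slow rotation in hand, parameterize $z_t:=tx^k+(1-t)x^{k+1}$ and compute
\[
|z_t-x_0|^2/r_k^2=t^2+(1-t)^2/4+\alpha t(1-t);
\]
the derivative $(5/2-2\alpha)t+(\alpha-1/2)$ is strictly positive on $[0,1]$ since $\alpha>1/2$, so $|z_t-x_0|$ increases monotonically from $r_{k+1}$ at $t=0$ to $r_k$ at $t=1$, placing the segment in $\overline{B_{r_k}(x_0)\setminus B_{r_{k+1}}(x_0)}$ and bounding its length by $2r_k=R/2^k$. Moreover $(z_t-x_0)\cdot e_1^{(k)}=tr_k+(1-t)r_{k+1}\alpha$ attains its minimum $r_{k+1}\alpha\ge (r_k/2)(1-9\gamma^2/2)$ at $t=0$, which combined with the lower inclusion at scale $s_k$ and $z_t\in B_{r_k}\subset B_{s_k}$ gives $\operatorname{dist}(z_t,\partial\Omega)\ge (z_t-x_0)\cdot e_1^{(k)}-\gamma s_k > 8\gamma r_k=4\gamma R/2^k$ when $\gamma\le 1/48$.

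For the second part, let $k_0$ be the largest integer with $|x-x_0|\le r_{k_0+1}$; by maximality $|x-x_0|>r_{k_0+2}=r_{k_0}/4$, and $k_0\ge 0$ since $|x-x_0|\le R/4=r_1$. Write $x-x_0=ae_1^{(k_0+1)}+w$ with $w\perp e_1^{(k_0+1)}$. An intermediate-value argument on the line $\{x_0+c e_1^{(k_0+1)}+w:c\in \bR\}$ (using the two-sided inclusion at scale $s_{k_0+1}$) produces a boundary point $y_*=x_0+c_* e_1^{(k_0+1)}+w$ with $|c_*|\le \gamma s_{k_0+1}$; the distance condition $|x-y_*|\ge |x-x_0|$ forces $(a-c_*)^2\ge a^2+|w|^2$, and the bound $|x-x_0|>r_{k_0+2}=s_{k_0+1}/4>2\gamma s_{k_0+1}$ rules out the wrong branch, leaving $a\ge |x-x_0|-\gamma s_{k_0+1}$. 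Transferring to $e_1^{(k_0)}$ via slow rotation and using $r_{k_0+1}<2|x-x_0|$ yields $(x-x_0)\cdot e_1^{(k_0)}\ge |x-x_0|(1-7\gamma)>10\gamma r_{k_0}$ for $\gamma\le 1/48$, and the same segment analysis as in the previous paragraph, applied to $\overline{x\,x^{k_0}}$, supplies the distance bound; the inclusion in $\overline{B_{r_{k_0}}(x_0)}$ and length bound $R/2^{k_0}$ are immediate from convexity of the closed ball. The most delicate parts will be the slow-rotation estimate and the direction-alignment bound for $x$, both of which rely on two-sided Reifenberg plus an extremal computation and whose numerical constants are used essentially sharply against the hypothesis $\gamma\le 1/48$.
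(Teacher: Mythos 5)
Your construction agrees with the paper's in placing $x^k$ at distance $R/2^{k+1}$ from $x_0$ along the upward axis $e_1^{(k)}$ of the Reifenberg coordinate system at scale $R/2^k$, but the verification takes a genuinely different route. You prove a quantitative slow-rotation bound $|e_1^{(k)}-e_1^{(k+1)}|\le 3\gamma$ and then read off all segment properties from an explicit parameterization of $\overline{x^k\,x^{k+1}}$ together with monotonicity of $|z_t-x_0|$. The paper avoids any rotation estimate: writing $\rho_k=R/2^k$, it uses the distance lower bound $\operatorname{dist}(x^{k+1},\partial\Omega)\ge(1/2-\gamma)\rho_{k+1}$ from the scale-$\rho_{k+1}$ ball, combined with the Reifenberg slab at scale $\rho_k$, to show $x^{k+1}\in H_k^+=\{y_1>x_{01}+5\gamma\rho_k\}$, and then concludes by convexity of $H_k^+\cap\overline{B_{\rho_k/2}(x_0)}$. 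Your route is longer and more computational, but it makes the inclusion $\overline{x^k\,x^{k+1}}\subset\overline{B_{\rho_k/2}(x_0)\setminus B_{\rho_{k+1}/2}(x_0)}$ fully transparent: the monotonicity of $|z_t-x_0|$ genuinely requires $\alpha>1/2$, a point the paper leaves implicit.

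Two steps need tightening. First, ``squaring reduces this to $\alpha\ge 1-9\gamma^2/2$'' is not quite right: squaring $\gamma(\alpha+2)\ge\sqrt{1-\gamma^2}\sqrt{1-\alpha^2}$ yields $\alpha^2+4\gamma^2\alpha+5\gamma^2-1\ge 0$, whose solution set has a second branch $\alpha\lesssim -1+\gamma^2/2$. You must rule this out separately; it is easy (letting $y$ approach $x_0+s_{k+1}e_1^{(k+1)}$ in your constraint forces $\alpha\ge -2\gamma$, which excludes the second branch for $\gamma\le 1/48$), but it does not follow from squaring alone. Second, when discarding the branch $a-c_*\le -|x-x_0|$ of $(a-c_*)^2\ge a^2+|w|^2$, the bound $|x-x_0|>2\gamma s_{k_0+1}$ only yields $a<-\gamma s_{k_0+1}$ there; the contradiction requires the further observation that $x\in\Omega\cap B_{s_{k_0+1}}(x_0)$ and the lower Reifenberg inclusion at scale $s_{k_0+1}$ force $a>-\gamma s_{k_0+1}$.
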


\begin{proof}
Set $\rho_k=R/2^k$.
By Assumption \ref{170226@ass1}, there exists a coordinate system associated with $(x_0,\rho_k)$ satisfying \eqref{170512_eq1} with $\rho_k$ in place of $R$.
Then, let $x^k$ be the intersection of the boundary of the ball $B_{\rho_k/2}(x_0)$ and the positive $y_1$-axis of the coordinate system associated with $(x_0,\rho_k)$.
Note that, as $k$ changes, the $y_1$-direction of the coordinate system may differ because, for each $k\in \{0,1,\ldots \}$, a coordinate system is chosen depending on $(x_0,\rho_k)$.
Since it holds that
$$
B_{(1/2-\gamma)\rho_k}(x^k) \subset \Omega_{\rho_k}(x_0),
$$
we have
\begin{equation}		\label{170426@eq2}
\operatorname{dist}(x^k,\partial \Omega)\ge (1/2-\gamma)\rho_k.
\end{equation}
By repeating the above argument we choose $x^{k+1}\in \Omega_{\rho_{k+1}}(x_0)$ satisfying
\begin{equation}		\label{170425@eq4}
\operatorname{dist}(x^{k+1},\partial \Omega)\ge (1/2-\gamma)\rho_{k+1} = (1/4-\gamma/2)\rho_k.
\end{equation}
In particular, $x^k$ and $x^{k+1}$ are located in the half space
\begin{equation}
							\label{eq0428_03}
H^+_k:=\left\{y:x_{01}+5 \gamma \rho_k<y_1\right\},
\end{equation}
where the coordinate system $(y_1,y')$ is that associated with $(x_0, \rho_k)$.
Indeed, it is clear that $x^k\in H^+_k$.
If $x^{k+1}\in \bR^d\setminus H^+_k$, then by Assumption \ref{170226@ass1}, it belongs to
$$
\left\{y:x_{01}-\gamma \rho_k<y_1\le x_{01}+5\gamma\rho_k\right\},
$$
which implies that
$$
\operatorname{dist}(x^{k+1},\partial \Omega)\le 6\gamma\rho_k.
$$
This contradicts \eqref{170425@eq4}.
Hence, we have
$$
x^k, \, x^{k+1}\in H^+_k\cap \overline{B_{\rho_k/2}(x_0)} \subset  \left\{y:x_{01}+\gamma \rho_k<y_1\right\} \cap B_{\rho_k}(x_0) \subset \Omega.
$$
This shows that
$$
\overline{x^k\, x^{k+1}}\subset \overline{B_{\rho_k/2}(x_0)\setminus B_{\rho_{k+1}/2}(x_0)}\cap \Omega, \quad \ell(\overline{x^k\, x^{k+1}}) \le \rho_k,
$$
and
$$
 \operatorname{dist}(z,\partial \Omega)> 4\gamma \rho_k \quad \text{for }\, z\in \overline{x^k\,x^{k+1}}.
$$

Now let $x\in \Omega$ satisfy \eqref{eq0428_02}.
Find an integer $k_0 \in \{0,1,2,\ldots\}$ satisfying
\begin{equation}
							\label{eq0430_02}
\frac{\rho_{k_0}}{8} < \operatorname{dist}(x,\partial\Omega) = |x-x_0| \le \frac{\rho_{k_0}}{4},
\end{equation}
so that $x \in \overline{B_{\rho_{k_0}/4}(x_0)}$.
We see that $x \in H_{k_0}^+$, where $H_{k_0}^+$ is as in \eqref{eq0428_03} with the coordinate system associated with $(x_0, \rho_{k_0})$.
Otherwise, we have $\operatorname{dist}(x,\partial\Omega) \le 6\gamma \rho_{k_0}$, which contradicts the first inequality in \eqref{eq0430_02}.
Hence, it follows that
$$
\overline{x\, x^{k_0}} \in H_{k_0}^+ \cap \overline{B_{\rho_{k_0}/2}(x_0)} \subset \left\{y: {x_0}_1 + \gamma \rho_{k_0} < y_1\right\} \cap B_{\rho_{k_0}}(x_0) \subset \Omega,
$$
$$
\ell(\overline{x\,  x^{k_0}}) \le \rho_{k_0},
$$
and
\eqref{eq0428_01} is satisfied for all $z \in \overline{x\, x^{k_0}}$ with $k_0$ in place of $k$.
The lemma is proved.
\end{proof}

\begin{proposition}		\label{MTP}
Let $\Omega$ be a Reifenberg flat domain satisfying Assumption \ref{170226@ass1} with $\gamma\in [0,1/48]$.
Let $0\in \partial \Omega$ and $R\in (0,R_0/4]$, and
$$
z_0:=(R/2,0,\ldots,0)
$$
in the coordinate system associated with the origin $0$ and $4R$ satisfying
\begin{equation}		\label{170426@eq1}
\{y:\gamma 4R<y_1\}\cap B_{4R} \subset \Omega_{4R}\subset \{y:-\gamma 4R<y_1\}\cap B_{4R}.
\end{equation}
Then, for any $x\in \Omega_R$, there exists a rectifiable curve $\eta$ joining $x$ and $z_0$ such that $\ell(\eta)\le 5R$, $\eta\subset \Omega_{7R/4}$, and
\begin{equation}		\label{170426@eq9b}
\operatorname{dist}(z,\partial \Omega)\ge 2\gamma|x-z|, \quad \ell(\eta;x,z)\le \frac{1}{\gamma}|x-z|  \quad \text{for all }\, z\in \eta.
\end{equation}
Here, $\ell(\eta)$ is the length of $\eta$, and $\ell(\eta;x,z)$ is the length of $\eta$ from $x$ to $z$.
\end{proposition}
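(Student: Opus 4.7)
The plan is to build $\eta$ as a concatenation of straight-line pieces, using the chains from Lemma~\ref{170425@lem2} to navigate through the interior of $\Omega$ from $x$ toward the well-interior point $z_{0}$. I split on $d:=\operatorname{dist}(x,\partial\Omega)$. If $d>R/4$, then by the flatness condition \eqref{170426@eq1} the $y_{1}$-coordinate of $x$ in the $(0,4R)$-coordinate system satisfies $x_{1}>4\gamma R+d>R/4$, so I would connect $x$ to $z_{0}$ by a two-piece path: first moving parallel to the $y_{1}$-axis to reach the hyperplane $\{y_{1}=R/2\}$ (which stays in $\{y_{1}\ge R/4\}\cap B_{4R}\subset\Omega$), and then moving horizontally to $z_{0}$. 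If $d\le R/4$, let $x^{*}\in\partial\Omega$ be the projection of $x$; then $|x^{*}|<5R/4$ and $|x-x^{*}|\le R/4$, so Lemma~\ref{170425@lem2} applied with $x_{0}=x^{*}$ and lemma-radius equal to the proposition's $R$ yields segments $\overline{x\,x^{k_{0}}},\overline{x^{k_{0}}x^{k_{0}-1}},\ldots,\overline{x^{1}x^{0}}$ inside $\Omega$, and I close $\eta$ with the segment $\overline{x^{0}z_{0}}$, where $x^{0}\in\partial B_{R/2}(x^{*})$ satisfies $\operatorname{dist}(x^{0},\partial\Omega)\ge(1/2-\gamma)R$.

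The length bound $\ell(\eta)\le 5R$ follows by telescoping: the initial segment contributes $\le R/2^{k_{0}}\le R/4$, the chain contributes $\le\sum_{k\ge 0}R/2^{k}\le 2R$, and the final segment contributes $\le|x^{0}|+|z_{0}|<7R/4+R/2$. The containment $\eta\subset\Omega_{7R/4}$ is immediate on each chain piece, since $x^{k}\in B_{R/2}(x^{*})\subset B_{7R/4}$, and also in the first case. The cigar inequalities \eqref{170426@eq9b} are verified segment by segment: on the chain piece $\overline{x^{k}x^{k-1}}$, Lemma~\ref{170425@lem2} gives $\operatorname{dist}(z,\partial\Omega)>4\gamma R/2^{k-1}$, while $|x-z|$ and $\ell(\eta;x,z)$ are both controlled by the telescoping sum $\sum_{j\ge k-1}R/2^{j}\le R/2^{k-2}$, so both required ratios hold with ample room given $\gamma\le 1/48$; on the remaining segments both $\operatorname{dist}(z,\partial\Omega)$ and $|x-z|$ are of order $R$, making the inequalities trivial.

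The step I expect to be the main obstacle is verifying that the final segment $\overline{x^{0}z_{0}}$ lies in $\Omega_{7R/4}$, as this is the only place where two distinct coordinate systems must be reconciled: the one associated with $(x^{*},R)$, which dictates the direction of $x^{0}$ relative to $x^{*}$, and the one associated with $(0,4R)$, which determines $z_{0}$. Reifenberg flatness forces the two approximating hyperplanes to differ only by an angle of order $\gamma$, which combined with \eqref{170426@eq1} should be enough to place the entire segment in a half-space contained in $\Omega_{4R}$, from which the containment follows.
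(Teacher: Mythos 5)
Your overall strategy — use Lemma~\ref{170425@lem2} to produce a chain of line segments from $x$ to $x^0$ and close with $\overline{x^0 z_0}$ — is the same as the paper's, and the rough accounting for $\ell(\eta)\le 5R$ and $\eta\subset\Omega_{7R/4}$ is on the right track. However, there are two genuine gaps, and they are related.

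First, you flag the inclusion $\overline{x^0 z_0}\subset\Omega$ as the main obstacle and propose to resolve it by bounding the angle between the hyperplane associated with $(x^*,R)$ and the one associated with $(0,4R)$. That is not a proof, and in fact the paper avoids any angle comparison entirely. The paper's argument is: from \eqref{170426@eq2} with $k=0$, $\operatorname{dist}(x^0,\partial\Omega)>(1/2-\gamma)R$, and from $z_0$'s placement $\operatorname{dist}(z_0,\partial\Omega)>(1/2-4\gamma)R$. Since any point of $\Omega_{2R}$ with first coordinate $\le 16\gamma R$ (in the $(0,4R)$-coordinates) has distance at most $20\gamma R$ to $\partial\Omega$, and $(1/2-4\gamma)R>20\gamma R$ for $\gamma\le 1/48$, both $x^0$ and $z_0$ lie in $\{y_1>16\gamma R\}$; by linearity of $y_1$ the whole segment does too, hence it lies in $\{y_1>4\gamma R\}\cap B_{4R}\subset\Omega$. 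No reconciliation of coordinate systems is needed.

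Second, the second half of \eqref{170426@eq9b}, namely $\ell(\eta;x,z)\le\gamma^{-1}|x-z|$, requires a \emph{lower} bound on $|x-z|$, which you never derive: you only record upper bounds on $|x-z|$ and $\ell(\eta;x,z)$ and a lower bound on $\operatorname{dist}(z,\partial\Omega)$. On the chain pieces the lower bound must come from the annular inclusion $\overline{x^k x^{k+1}}\subset\overline{B_{R/2^{k+1}}(x_0)\setminus B_{R/2^{k+2}}(x_0)}$ in Lemma~\ref{170425@lem2}, which gives $|x-z|\ge R/2^{k+2}-R/2^{k_0+2}$; without this the ratio $\ell/|x-z|$ is uncontrolled. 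On the final segment the paper obtains $|x-z|>8\gamma R$ precisely because its case split is on $x_1\le 8\gamma R$ (not on $\operatorname{dist}(x,\partial\Omega)\le R/4$ as you propose) and $\overline{x^0 z_0}\subset\{y_1>16\gamma R\}$. Your split on $d\le R/4$ admits points $x$ with $x_1$ comparable to $R/4$, for which no such lower bound on $|x-z|$ over the final segment is apparent from what you write; you would again have to invoke the unproven angle estimate to place $\overline{x^0 z_0}$ far from $x$. Put differently, the paper's choice of threshold $8\gamma R$ is not cosmetic but is what makes the carrot condition verifiable on the final segment.

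A minor point: in your first case the flatness \eqref{170426@eq1} gives $x_1\ge d-4\gamma R$, not $x_1>4\gamma R+d$; the conclusion $x_1>R/6>4\gamma R$ survives, but the stated bound $x_1>R/4$ (hence the claim that the path stays in $\{y_1\ge R/4\}$) is false.
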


\begin{remark}		\label{170427@rmk1}
Every rectifiable curve in metric space can be parametrized by arclength. 
Indeed, the rectifiable curve $\eta$ constructed in Proposition \ref{MTP} can be parametrized as follows.
Set
$$
s:=\frac{\ell(\eta;x,z)}{\ell(\eta)}\in [0,1]  \quad \text{and}\quad \eta(s)=z.
$$
Then $\eta=\eta(s)$ is a curve on $[0,1]$ satisfying
\begin{equation}		\label{170502@eq1}
\operatorname{Lip}(\eta)\le 5R, \quad \operatorname{dist}(\eta(s),\partial \Omega)> \frac{1}{4\cdot 24\cdot 48}Rs \quad \text{for }\, s\in [0,1].
\end{equation}
Indeed, for any $s_1,\,s_2\in [0,1]$ with
$$
s_1=\frac{\ell(\eta;x,z^1)}{\ell(\eta)}, \quad s_2=\frac{\ell(\eta;x,z^2)}{\ell(\eta)},
$$
we have
\begin{align*}
\frac{|\eta(s_1)-\eta(s_2)|}{|s_1-s_2|}&=|z^1-z^2|\frac{\ell(\eta)}{|\ell(\eta;x,z^1)-\ell(\eta;x,z^2)|}\\
&=|z^1-z^2|\frac{\ell(\eta)}{|\ell(\eta;z^1,z^2)|}\le \ell(\eta)\le 5R,
\end{align*}
which implies that $\operatorname{Lip}(\eta)\le 5R$.
To prove the second inequality in \eqref{170502@eq1}, we may assume that $\gamma=1/48$.
If $|\eta(s)-z_0|\le R/4$, then (use $s\le 1$)
$$
\operatorname{dist}(\eta(s), \partial \Omega)\ge \frac{R}{4}-\frac{4 R}{48}= \frac{R}{6}\ge \frac{Rs}{6}.
$$
On the other hand, if $|\eta(s)-z_0|>R/4$, then by \eqref{170426@eq9b} with $\gamma=1/48$, we have
$$
s=\frac{\ell(\eta;x,\eta(s))}{\ell(\eta)}\le \frac{48|x-\eta(s)|}{|\eta(s)-z_0|}<\frac{4\cdot 24 \cdot 48}{R}\operatorname{dist}(\eta(s),\partial \Omega).
$$
\end{remark}

\begin{proof}[Proof of Proposition \ref{MTP}]
In this proof, we fix the coordinate system associated with the origin $0$ and $4R$ satisfying \eqref{170426@eq1}.
Let $x\in \Omega_R$.
If $x$ is located in the half space
$$
\left\{y: 8\gamma R <y_1\right\},
$$
then one can easily check that $\eta=\overline{x\,  z_0} \subset \Omega_R$ satisfies the assertions in Proposition \ref{MTP}.
In particular,
$$
\operatorname{dist}(z, \partial\Omega) \ge 4 \gamma R \ge 2 \gamma |x-z|
$$
for $z \in \overline{x\,z_0}$.

Now suppose that
\begin{equation}		\label{170427@eq1}
x\in\left\{y:y_1\le 8\gamma R\right\},
\end{equation}
which implies that $\operatorname{dist}(x,\partial \Omega)\le 8\gamma R + 4 \gamma R \le R/4$ (recall that $\gamma \le 1/48$).
Let $x_0$  be a point on $\partial \Omega$ such that
$$
\operatorname{dist}(x,\partial \Omega)=|x-x_0|\le \frac{R}{4},
$$
and observe that
$$
|x_0|\le |x_0-x|+|x|<\frac{5R}{4}.
$$
By applying Lemma \ref{170425@lem2} to $\Omega_R(x_0)$ we obtain
$$
\{x^0, x^1, \ldots,x^{k_0}\} \subset \Omega_R(x_0)
$$
such that
\begin{equation}
							\label{170501@eq3}
x \in \overline{B_{R/2^{k_0+2}}(x_0)} \cap \Omega, \quad \overline{x\,x^{k_0}}\subset \overline{B_{R/2^{k_0+1}}(x_0)}\cap \Omega,
\end{equation}
\begin{equation}		\label{170501@eq3a}
\ell(\overline{x\, x^{k_0}}) \le \frac{R}{2^{k_0}}, \quad \operatorname{dist}(z,\partial\Omega) > 4 \gamma \frac{R}{2^{k_0}} \quad \text{for} \,\, z\in \overline{x\, x^{k_0}},
\end{equation}
and, in case $k_0 \ge 1$,
\begin{equation}		\label{170501@eq3b}
\overline{x^k \, x^{k+1}} \subset \overline{B_{R/2^{k+1}}(x_0) \setminus B_{R/2^{k+2}}(x_0)} \cap \Omega,
\end{equation}
\begin{equation}		
							\label{170501@eq1}
\ell(\overline{x^k\, x^{k+1}}) \le \frac{R}{2^k}, \quad  \quad \operatorname{dist}(z,\partial\Omega) > 4 \gamma \frac{R}{2^k} \quad \text{for} \,\, z \in \overline{x^k \, x^{k+1}},
\end{equation}
where $k\in \{0,\ldots,k_0-1\}$.
To construct a curve joining $x$ and $z_0$, we connect $x$ to $z_0$ by the line segment $\overline{x \, x^{k_0}}$ and $\overline{x^k \, x^{k+1}}$, $k=0,1,\ldots,k_0$, up to $x^0$, and then connect $x^0$ to $z_0$ by the line segment $\overline{x^0 \, z_0}$.
Precisely, we connect $x$ and $z_0$ by the curve
$$
\eta=\eta_1\cup \overline{x^0\, z_0},
$$
where $\eta_1$ is a curve defined by
$$
\eta_1=
\left\{
\begin{aligned}
&\overline{x\,x^{k_0}} \quad \text{if} \quad k_0 = 0,
\\
&\overline{x\,x^{k_0}}\cup \left(\bigcup_{k=0}^{k_0-1}\overline{x^k\, x^{k+1}}\right) \quad \text{if} \quad k_0 \ge 1.
\end{aligned}
\right.
$$
Since $x_0\in B_{5R/4}$ and $|z-x_0|\le R/2$ for $z\in \eta_1$, we have
$$
\eta_1\subset \Omega_{7R/4}.
$$
We also obtain by \eqref{170501@eq3a} and \eqref{170501@eq1} that
$$
\ell(\eta_1)\le 2R.
$$
Note that the line segment $\overline{x^0 \, z_0}$ satisfies
\begin{equation}
							\label{eq0429_02}
\overline{x^0\,  z_0} \subset \Omega_{7R/4}, \quad \ell(\overline{x^0\,  z_0}) \le 3R, \quad \operatorname{dist}(z, \partial \Omega) > 12\gamma R
\end{equation}
for $z \in \overline{x^0 \, z_0}$.
Indeed, since $x_0 \in B_{5R/4}$ and $x^0 \in \partial B_{R/2}(x_0)$, it follows that $x^0 \in B_{7R/4}$.
This together with the fact that $z_0 \in \overline{B_{R/2}}$ shows that
\begin{equation}
							\label{eq0430_03}
\overline{x^0 \, z_0} \subset B_{7R/4}, \quad
\ell(\overline{x^0 \, z_0}) \le 3R.
\end{equation}
Moreover, by the choice of $z^0$ and \eqref{170426@eq2} with $k=0$, we have
$$
\operatorname{dist}(z_0, \partial\Omega) > (1/2 - 4\gamma)R, \quad
\operatorname{dist}(x^0,\partial\Omega) > (1/2-\gamma) R.
$$
Using this together with the fact that $x^0, z_0 \in \Omega_{2R}$, we have
\begin{equation}
							\label{eq0430_01}
\overline{x^0\, z_0}\subset \{y:  16 \gamma R < y_1\} \cap B_{4R} \subset \{y: 4 \gamma R < y_1\} \cap B_{4R} \subset \Omega.
\end{equation}
This along with \eqref{eq0430_03} proves \eqref{eq0429_02}.

From \eqref{eq0429_02} and the fact that $\eta_1\subset \Omega_{7R/4}$ and $\ell(\eta_1)\le 2R$, it follows that
$$
\eta\subset \Omega_{7R/4} \quad \text{and}\quad \ell(\eta)\le 5R.
$$

We next prove the first inequality in \eqref{170426@eq9b}.
For $z\in \overline{x\, x^{k_0}}$, by \eqref{170501@eq3a} we see that
$$
\operatorname{dist}(z,\partial \Omega)> 4 \gamma \frac{R}{2^{k_0}}\ge 4 \gamma |x-z|.
$$
If $z\in \overline{x^k\, x^{k+1}}$ with $k\in \{0,\ldots,k_0-1\}$ and $k_0 \ge 1$, then by \eqref{170501@eq3b}, \eqref{170501@eq1}, and the fact that $x \in \overline{B_{R/2^{k_0+2}}(x_0)}$, we have
$$
|x-z|\le |x-x_0|+|x_0-z|\le \frac{R}{2^{k_0+2}}+\frac{R}{2^{k+1}} \le \frac{R}{2^k} <\frac{1}{4 \gamma}\operatorname{dist}(z,\partial \Omega).
$$
For the case $z \in \overline{x^0\, z_0}$, since $z \in \Omega_{2R}$ and $x \in \Omega_R$, we have
$$
|x-z| < 3R,
$$
which, when combined with the last inequality in \eqref{eq0429_02}, proves that
$$
4\gamma |x-z| < \operatorname{dist}(z,\partial\Omega)
$$
for $z \in \overline{x^0\, z_0}$.

To prove the second inequality in \eqref{170426@eq9b}, we first observe that
$\ell(\eta;x,z)=|x-z|$ for $z\in \overline{x \, x^{k_0}}$.
If $z\in \overline{x^k\, x^{k+1}}$ with $k\in \{0,\ldots,k_0-1\}$ and $k_0 \ge 1$, from \eqref{170501@eq3} and \eqref{170501@eq3b} we have
$$
\quad |x-x_0|\le \frac{R}{2^{k_0+2}}  \quad \text{and} \quad \frac{R}{2^{k+2}} \le |z-x_0|\le \frac{R}{2^{k+1}},
$$
which shows that
\begin{align*}
|z-x|&\ge \frac{R}{2^{k+2}}-\frac{R}{2^{k_0+2}} \\
&= \frac{1}{4}\left( \frac{R}{2^{k+1}} + \cdots + \frac{R}{2^{k_0}} \right)\ge \frac{1}{24}\left( \frac{R}{2^{k}} + \cdots + \frac{R}{2^{k_0}} \right)\\
& \ge \frac{1}{24} \left(\ell(\overline{z \, x^{k+1}}) + \cdots +  \ell(\overline{x^{k_0} \, x^{k_0-1}}) + \ell(\overline{x \, x^{k_0}})\right)
= \frac{1}{24}\ell(\eta,x,z).
\end{align*}
For the case $z \in \overline{x^0\, z_0}$, from \eqref{170427@eq1}, \eqref{eq0430_01}, and the fact that $\ell(\eta)\le 5R$, it follows that
$$
|x-z| \ge 8 \gamma R \ge \gamma \ell(\eta) \ge  \gamma \ell(\eta;x,z).
$$
The proposition is proved.
\end{proof}

\begin{proof}[Proof of Theorem \ref{170503@thm1}]
Without loss of generality, we assume that $x_0=0$.
Set $h=4\cdot 24\cdot 48$ and
$$
z_0:=(R/2,0,\ldots,0)
$$
in the coordinate system associated with the origin $0$ and $4R$ satisfying \eqref{170426@eq1}.
For $x\in \Omega_R$ and $z\in B_{R/h}(z_0)$, we define
$$
\tilde{\eta}(s;x,z)=\eta(s;x)+s(z-z_0), \quad 0\le s\le 1,
$$
where $\eta(s;x)(=\eta(s))$ is the parametrized curve joining $x$ and $z_0$ constructed in Proposition \ref{MTP} and Remark \ref{170427@rmk1}.
It then holds that
$$
\tilde{\eta}(0;x,z)=x, \quad \tilde{\eta}(1;x,z)=z, \quad \tilde{\eta}\subset \Omega, \quad
\dot{\tilde{\eta}}(s)=\dot{\eta}(s)+z-z_0.
$$
In particular, from the fact that
$$
\eta\subset \Omega_{7R/4} \quad \text{and}\quad z\in B_{R/h}(z_0)\subset B_{R/4}(z_0),
$$
we have
$\tilde{\eta}\subset \Omega_{2R}$.
Denote
$$
\bar{u}=\frac{1}{\|\phi\|_{L_1(\bR^d)}}\int_{\bR^d}u(z)\phi(z-z_0)\,dz = \frac{1}{\|\phi\|_{L_1(\bR^d)}}\int_{B_{R/h}(z_0)}u(z)\phi(z-z_0)\,dz,
$$
where $\phi$ is a smooth function in $\bR^d$ satisfying
$$
0\le \phi\le 1, \quad \phi\equiv 1 \, \text{ on }\, B_{R/2h}, \quad \operatorname{supp}\phi\subset B_{R/h}.
$$
We then have
\begin{align*}
u(x)-\bar{u}&=\frac{1}{\|\phi\|_{L_1(\bR^d)}}\int_{B_{R/h}(z_0)}\big(u(\tilde{\eta}(0;x,z))-u(\tilde{\eta}(1;x,z))\big)\phi(z-z_0)\,dz\\
&=\frac{-1}{\|\phi\|_{L_1(\bR^d)}}\int_{B_{R/h}(z_0)}\int_0^1 \dot{\tilde{\eta}}(s;x,z)\cdot \nabla u(\tilde{\eta}(s;x,z))\phi (z-z_0)\,ds\,dz.
\end{align*}
By setting $y=\tilde{\eta}(s;x,z)$ in the above identity, and using the fact that $y\in \Omega_{2R}$ and
$\|\phi\|_{L_1(\bR^d)}\ge N(d,h)R^d$, we get
\begin{align}
\nonumber
&|u(x)-\bar u|\\
\nonumber
&\le \frac{N}{R^d}\int_{\Omega_{2R}}\int_0^1\left|\dot{\eta}(s;x)+\frac{y-\eta(s;x)}{s}\right|\phi\left(\frac{y-\eta(s;x)}{s}\right)\frac{1}{s^d}\,ds\,| \nabla u(y)|\,dy\\
\label{170422@eq1a}
&= \frac{N}{R^d}\int_{\Omega_{2R}}G(x,y)|\nabla u(y)|\,dy,
\end{align}
where
$$
G(x,y)=\int_0^1 \left|\dot{\eta}(s;x)+\frac{y-\eta(s;x)}{s}\right|\phi\left(\frac{y-\eta(s;x)}{s}\right)\frac{1}{s^d}\,ds.
$$
Observe that from Remark \ref{170427@rmk1}
$$
|x-\eta(s;x)|=|\eta(0;x)-\eta(s;x)|\le 5Rs.
$$
This implies  that, for $y\in \Omega_{2R}$ with $|x-y|\ge (1/h+5)Rs$, we have
$$
\phi\left(\frac{y-\eta(s;x)}{s}\right)=0,
$$
because in this case
$$
|y-\eta(s;x)|\ge |x-y|-|x-\eta(s;x)|\ge \frac{Rs}{h}.
$$
Hence, for $x \in \Omega_R$ and $y \in \Omega_{2R}$, we have
$$
G(x,y)=\int^1_{c_0|x-y|/R}\left|\dot{\eta}(s;x)+\frac{y-\eta(s;x)}{s}\right|\phi\left(\frac{y-\eta(s;x)}{s}\right)\frac{1}{s^d}\,ds,
$$
where $c_0=\left(\frac{1}{h}+5\right)^{-1}$, $c_0|x-y|/R < 1$, and
$$
\left|\dot{\eta}(s;x)+\frac{y-\eta(s;x)}{s}\right|\le 5R+\left|\frac{y-x}{s}\right|+\left|\frac{x-\eta(s;x)}{s}\right|\le 16R.
$$
It then follows that
\begin{equation}		\label{170503@eq1}
G(x,y)\le \int_{c_0|x-y|/R}^1\frac{16R}{s^d}\,ds\le N(d)\frac{R^d}{|x-y|^{d-1}}
\end{equation}
for $x\in \Omega_R$ and $y\in \Omega_{2R}$.
Combining \eqref{170422@eq1a} and \eqref{170503@eq1}, we obtain for $x\in \Omega_R$ that
$$
|u(x)-\bar{u}|\le N\int_{\Omega_{2 R}}\frac{|\nabla u(y)|}{|x-y|^{d-1}}\,dy
=N\int_{\bR^d}\frac{|\nabla u(y)|I_{\Omega_{2 R}}}{|x-y|^{d-1}}\,dy.
$$
Using the estimate for fractional integrations, we see that
$$
\|u-\bar u\|_{L_{q^*}(\Omega_R)}\le N\|\nabla u\|_{L_{q}(\Omega_{2R})},
$$
where $q^*=dq/(d-q)$ and $N=N(d,q)$.
Therefore, by using the fact that
$$
\|u-(u)_{\Omega_R}\|_{L_{q^*}(\Omega_R)}\le \|u-\bar{u}\|_{L_{q^*}(\Omega_R)},
$$
we obtain the desired estimate.
The theorem is proved.
\end{proof}

\bibliographystyle{plain}

\def\cprime{$'$}

\end{document}